\documentclass[11pt]{article}     
\usepackage{amsthm}
\usepackage{bbm}
\usepackage{amssymb}
\usepackage{aistats2020}
\usepackage{algorithm}
\usepackage[noend]{algpseudocode}
\makeatletter
\def\BState{\State\hskip-\ALG@thistlm}

\usepackage{amsmath}               
  {
      \theoremstyle{plain}
      \newtheorem{assumption}{Assumption}
  }
\listfiles
\RequirePackage{filecontents}
\usepackage{algorithm}
\usepackage[noend]{algpseudocode}
\makeatletter
\def\BState{\State\hskip-\ALG@thistlm}
\makeatother

\usepackage[noend]{algpseudocode}
\newcommand{\e}{\varepsilon}
\newcommand{\la}{\langle}
\newcommand{\ra}{\rangle}

\newcommand{\E}{\mathbb E}
\newcommand{\R}{\mathbb R}
\newcommand{\Var}{\text{Var}}

\newcommand{\de}{\delta}

\newcommand{\Prob}{\mathbb P}
\usepackage[utf8]{inputenc} 
\usepackage[T1]{fontenc}    
\usepackage{hyperref}       
\usepackage{url}            
\usepackage{booktabs}       
\usepackage{amsfonts}       
\usepackage{nicefrac}       
\usepackage{microtype}      
\usepackage{cleveref}
\usepackage{graphbox}
\usepackage{graphicx}
\usepackage[dvipsnames]{xcolor}
\usepackage{import}
\usepackage{multicol, blindtext}
\usepackage[font=small,labelfont=bf]{caption}
\usepackage{natbib}
\usepackage{amssymb,amsmath,amsthm,latexsym}

\usepackage{booktabs}

\usepackage{fullpage}
\usepackage{hyperref}
\usepackage{url}
\usepackage{multicol}
\usepackage{bbm}

\usepackage{graphicx}

\newtheorem{theorem}{Theorem}
\newtheorem{lemma}{Lemma}

\newtheorem{corollary}{Corollary}

\newcommand{\be}{\begin{equation}}
\newcommand{\ee}{\end{equation}}


\begin{document}
	\twocolumn[
\aistatstitle{Log Barriers for Safe Non-convex Black-box Optimization}


 
\aistatsauthor{Ilnura Usmanova \And  Andreas Krause \And  Maryam Kamgarpour }
\aistatsaddress{ Automatic Control Laboratory,\\
 ETH Zürich, Switzerland \And  Machine Learning Institute,\\
 ETH Zürich, Switzerland \And Automatic Control Laboratory,\\
 ETH Zürich, Switzerland
 } 
]
	\begin{abstract}
		We address the problem of minimizing a  smooth function $f^0(x)$ over a compact set $D$ defined by smooth functional constraints $f^i(x)\leq 0,~ i = 1,\ldots, m$ given noisy value measurements of $f^i(x)$.
		This problem arises in safety-critical applications, where certain parameters need to be adapted online in a data-driven fashion, such as in
		personalized medicine, robotics, manufacturing, etc. In such cases, it is important to ensure constraints are not violated while 
		taking measurements and seeking the minimum of the cost function. We propose a new algorithm s0-LBM, which provides provably feasible iterates with high probability and applies to the challenging case of uncertain zero-th order oracle. We also analyze the convergence rate of the algorithm, and empirically demonstrate its effectiveness.
\footnote{We thank the support of Swiss 
National Science Foundation, under the grant SNSF 200021\_172781, and ERC under the European Union's Horizon 2020 research and innovation programme 
grant agreement No 815943.}
	\end{abstract}

	\section{INTRODUCTION}
	Many applications in robotics \citep{schaal2010learning}, manufacturing \citep{maier2018turning}, health sciences, finance, etc. require minimizing a loss function under constraints and uncertainty.  Optimizing a loss function under partially revealed constraints can be further complicated by the fact that observations are available only inside the feasible set. Hence, one needs to carefully choose actions to ensure the feasibility of each iterate while pursuing the optimal solution. In the machine learning community, this problem is known as \emph{safe learning}. For such tasks, feasible optimization methods are required.  There are many first and second order feasible methods in the literature. Although given noisy zero-th order oracle the Hessians are hard to estimate with good accuracy, we can approximate derivatives using finite differences.	Most well known and widely used first order methods for stochastic optimization are dealing with constraints using projections. However, the lack of 
	global knowledge of the constraint functionals makes it impossible to compute the corresponding projection operator. 

\paragraph{Related work.}
	There is a lack of zero-th order feasible (safe) algorithms for black-box constrained optimization in the literature.
	 \citet{balasubramanian2018zeroth} provide a comprehensive analysis of the performance of several zero-th order algorithms for non-convex optimization. However, 
	the conditional gradient based algorithm (Frank-Wolfe) of \citet{balasubramanian2018zeroth} for constrained non-convex problems requires global knowledge of the constraint functions, as the linear objective must be optimized with respect to these constraints. 
	 \citet{usmanova2019safe} introduce a Frank-Wolfe based algorithm applied to the case of noisy zero-th order oracle for linear constraints. 
	 This work proves the feasibility of iterates with high probability and bounded the convergence rate with high probability, but does require convexity.

 Non-convex non-smooth problems can be addressed by feasible methods, such as the first order Method of Feasible Directions \citep{topkis1967convergence} or the second order Feasible Sequential Quadratic Programming (FSQP) algorithm
	\citep{tang2014feasible}. Another algorithm for non-convex non-smooth problems is given by
	\citet{facchinei2017feasible}. The idea of this algorithm is to use as a direction of movement the minimizer of a local convex approximation of the objective subject to a local convex approximation of constraint set. 
	Unfortunately, all the guarantees for the above methods are in terms of asymptotic convergence to a stationary point.

Another class of safe algorithms for global black-box optimization is based on Bayesian Optimization(BO) such as SafeOpt \citep{sui2015safe} and its extensions \citep{berkenkamp2016bayesian}. The main drawback of these methods is that the computational complexity 
grows exponentially with dimensionality.

	Interior Point Methods (IPM) are feasible approaches by definition, and they are widely used for Linear Programming, Quadratic Programming, and Conic optimization problems. 
	By using self-concordance properties of specifically chosen barriers and second order information, these problems are  shown to be extremely efficiently solved by IPM. However, in the cases when constraints are unknown, building the barrier with self-concordance properties is not possible. In these cases it is possible to use logarithmic barriers for general black-box constraints. \citet{hinder2018one} propose to  choose adaptive step sizes for the  gradient algorithm for the log barriers and give the analysis of the convergence rate. The work of \citet{hinder2018one} assume knowledge of the exact gradients of the cost and constraint functions. In the present work, we extend this approach for the case in which we only have a possibly noisy zero-th order oracle.
	
\paragraph{Our Contributions.} In this paper we propose the first safe zero-th order algorithm 
for non-convex 
 optimization with a  black-box noisy oracle. We prove that it generates feasible iterations  with high probability, and analyse its convergence rate to the local optimum. Each iteration is  computationally very cheap and 
does not require solving any subproblems such as those required for Frank-Wolfe or Bayesian Optimization based algorithms.
In Table 1, we provide a comparison of our algorithm with the existing ones for unconstrained and constrained zero-th order non-convex optimization. 
In the first two algorithms a multiple point feedback is assumed, i.e., it is possible to measure at several points with the same noise realization. 
 The convergence rate in the second column is proven for known polyhedral constraints.  
 In our algorithm we assume a more realistic and more complicated setup where the noise is changing with each measurement. 
There are some works in zero-th order 1-point feedback for convex optimization with known constraints, for example \citet{bach2016highly} require $O\left(\frac{d^2}{\eta^3}\right)$ measurements to achieve $\eta$ accuracy.
\begin{table*}[t]\label{T1}
 		\begin{center}
 		\scalebox{0.9}{
 		\begin{tabular}[h!]{|p{20mm}|p{35mm}|p{50mm}|p{65mm}
 			|}
 				\hline
 				 Problem
 				& Unconstrained
				& Known  constraints
				& {\bf Safe despite unknown constraints} \\
				\hline
 				 Feedback
 				& Noisy 2-point
				& Noisy 2-point
				& Noisy 1-point \\
				\hline
				Optimality criterion &  Stationary point: \newline $\E\|\nabla f(x_t)\|_2 \leq \eta
 				$ &
 				$\eta$-stationary point: \newline $\E\la \nabla f(x_t), x_t - u\ra \leq \eta \forall u\in D$ 
 				& $\eta$-approximate scaled KKT point: \newline
				 $\Prob\{\|\nabla L(x_t)\|_2 \leq  \eta(1+\|\lambda_t\|_{\infty})\}\geq 1-\de$\\
				\hline
				Number of measurements
				&  $O\left(\frac{d}{\eta^4
				}\right)$ \newline\footnotesize\textit{(no matrix inversion)}\small
				\newline 
				 $O\left(\frac{d}{\eta^{3.5}}
				\right) + \tilde O \left(\frac{d^4}{\eta^{2.5}}
				\right) $ \newline\footnotesize\textit{(hessian estimation)}\newline 
				(Balasubramanian, Ghadimi, 2018) \small
				& $ O\left(\frac{d}{\eta^4
				}\right)$ \newline
		\footnotesize\textit{(conditional gradient based,
				\newline requires $O\left(\frac{1}{\eta^2
				}\right)$ optimization subproblems w.r.t. $D$)}\newline
				(Balasubramanian, Ghadimi, 2018) \small
				& $\tilde O\left(\frac{d^{3}}{\eta^{7}}\sqrt{\ln 1/\de}
				\right) 
				$ ($ O\left(\frac{1}{\eta^3}
				\right) steps
				$) \newline\footnotesize\textit{({\bf this paper},  \newline does not require solving subproblems or matrix inversions)} \small\\
				
				\hline
                \end{tabular}
	}
	\end{center}
			\caption{
			Upper $O(\cdot)$ bounds on number of  zero-th order oracle calls for non-convex smooth optimization.}
\end{table*}


\section{PROBLEM STATEMENT}
\paragraph{Notations and definitions.} 
Let $\|\cdot\|_2$,$\|\cdot\|_1$ and $\|\cdot\|_{\infty}$ denote $l_2$-norm, $l_1$-norm  and $l_{\infty}$-norm respectively on $\R^d$. A function $f(x)$ is called \textit{$L$-Lipschitz continuous} if
\vspace{-0.2cm}
\begin{align}
|f(x) - f(y)|\leq L\|x - y\|_2 
.
\end{align} 
It is called \textit{$M$-smooth} if 
the gradients $\nabla f(x)$ are $M$-Lipschitz continuous, i.e., 
\vspace{-0.2cm}
\begin{align}\label{M}
\|\nabla f(x) - \nabla f(y)\|_2\leq M\|x - y\|_2. 
\end{align}
A random variable $\xi$ is zero-mean $\sigma$-sub-Gaussian if  
\vspace{-0.3cm}
\begin{align*}\forall \lambda \in \R ~ 
\E\left[ e^{\lambda \xi}\right] \leq \text{exp}\left(\frac{\lambda^2\sigma^2}{2}\right),
\end{align*} which implies that 
$\Var\left[\xi 
\right] \leq \sigma^2$.
 \paragraph{Problem formulation} We consider the problem of non-convex \textit{safe learning}  defined as a constrained optimization problem 
 \hspace{-0.2cm}
\begin{align}\label{problem}
  &\min_{x\in \R^d} ~~~~~~~f^0(x) \nonumber\\
 &\text{subject to }  f^i(x) \leq 0, ~~i =  1,\ldots,m,
\end{align}
where the objective function $f^0: \R^d \rightarrow \R$ and the constraints $f^i: \R^d \rightarrow \R$ are unknown 
continuous functions, and can only be accessed at feasible points $x$. We denote by $D$ the feasible set $D := \{x\in\R^d: f^i(x)\leq  0, i = 1,\ldots,m\}.$ 

\begin{assumption}\label{A:1}
Let $M,L > 0$. 
The objective and the constraint functions $f^i(x)$ for $ i = 0,\ldots,m$ are  $M$-smooth on $D$. Also, 
 constraint  functions $f^i(x) $ for $  i = 1,\ldots,m$ are $L$-Lipschitz continuous on $D$.
\end{assumption} 
\begin{assumption}\label{A:2}
The feasible set $D$ has a non-empty interior, and there exists a known starting point $x_0$ for which $f^i(x_0)<0$  for $i = 1, \ldots, m.$
\end{assumption} 


 \paragraph{Oracle information.}
We consider zero-th order oracle information.  If we can measure the function values exactly, then one possible oracle is the \emph{Exact Zero-th Order oracle (EZO)}.\\
\textbf{EZO:} 
provides the exact values  $f^i(x_t)~ \forall i = 0,\ldots, m$  for any requested  
point $x_t \in D.$ 

In many applications, the measurements of the functions are noisy.  
We assume that the additive noise $\xi$ is coming from a zero-mean $\sigma$-sub-Gaussian distribution. 
We call this oracle \emph{Stochastic Zero-th Order oracle (SZO)}.\\
\textbf{SZO:} $\forall i = 0,\ldots, m$ provides the noisy function 
values $F^i(x_t, \xi^i_t) = f^i(x_t) + \xi^i_t$, $\xi^i_t$ is a zero-mean $\sigma$-sub-Gaussian noise independent of previous measurements 
for any requested point $x_t\in D$. 
We assume that the noise values $\xi^i_t$ are independent over time $t$ and indices $i$.

\paragraph{Optimality criterion.}
The condition $\|\nabla f^0(x_T)\|_2\leq \e$ is usually used as an optimality criterion in non-convex smooth optimization without constraints. It is well known that in the unconstrained case the classical gradient descent method converges with rate $O(\frac{1}{\e^2}),$ which matches the lower bound derived for this class of problems \citep{carmon2017lower}.  In the non-convex constrained optimization, first order criteria are Karush-Kuhn-Tucker (KKT) conditions, which are necessary in the presence of 
regularity conditions called Constraint Qualification (CQ).
In such cases, we can measure the solution accuracy by satisfying  approximate KKT conditions. 
The point $(x, \lambda) \in \R^n\times \R^m$ is called a KKT  point if it satisfies the necessary condition for local optimality:
\vspace{-0.2cm}
\begin{align*}
&\lambda^i,-f^i(x) \geq 0, ~ \forall i = 1,\ldots,m\\
&\lambda^i(-f^i(x)) = 0,~ \forall i = 1,\ldots,m\\
&\|\nabla L(x, \lambda)\|_2 = 0,
\end{align*}
where $\lambda$ is the vector of dual variables and  $L(x,\lambda) := f^0(x) + \sum_{i=1}^m\lambda^i f^i(x)$ is the Lagrangian. 
There are several ways to define an approximate KKT point, 
 see \citep{cartis2014complexity,birgin2016evaluation}. 
 Similar to \citet{cartis2014complexity}, we define an approximate KKT point as follows. 
An \emph{$\eta$-approximate scaled KKT point} \textbf{(s-KKT)} for some $\eta>0$ is a pair $(x,\lambda)$ which satisfies: 
\begin{align}
\vspace{-0.3cm}
&\lambda^i,-f^i(x) \geq 0,~ \forall i = 1,\ldots,m \tag{s-KKT.1}\\
&\lambda^i(-f^i(x)) \leq \eta ,~ \forall i = 1,\ldots,m \tag{s-KKT.2}\\
&\|\nabla L(x, \lambda)\|_2 \leq \eta (1+\|\lambda\|_{\infty}). \tag{s-KKT.3}
\end{align}
 Note that the approximation lies in substituting the equality constraints of the standard KKT with inequalities (s-KKT.2), (s-KKT.3). In case $\|\lambda\|_{\infty}$ can be uniformly bounded by some constant $Q$, the scaled $\eta$-approximate KKT point is an unscaled $(Q+1)\eta$-approximate KKT point.

\section{PRELIMINARIES}
\subsection{ Log barrier algorithm.}
We 
 address the safe learning problem using the log barriers approach. 
 The log barrier function and its gradient are defined as follows:
 \vspace{-0.3cm}
\begin{align}\label{log-b}
B_{\eta}(x)& = f^0(x) - \eta \sum_{i=1}^m \log (-f^i(x)),\\
  \vspace{-0.3cm}\label{d-log-b}
\nabla B_{\eta}(x)& = \nabla f^0(x) - \eta \sum_{i=1}^m \frac{\nabla f^i(x)}{f^i(x)}.
\end{align} 
The main idea of the log barrier methods  is to solve a sequence of the \textit{barrier subproblems}
 \vspace{-0.2cm}
\begin{align}\label{barrier:sub}\min_{x\in \R^d} B_{\eta_k}(x)\end{align}
with decreasing $\eta_k = \eta_{k-1}/\mu$ for some $\mu > 1.$  For the rest of the paper we fix $\eta$ to be constant, since we propose an algorithm to solve the subproblems, which is challenging given only noisy function evaluations. 
Let us set the pair of primal and dual variables to be $(x, \lambda) = \left(x, \left[\frac{\eta}{- f^i(x)}\right]\right)$, where $\lambda^i = \frac{\eta}{- f^i(x)}$ for $i = 1,\ldots,m.$ 
Then, one can verify the following properties: 
\vspace{-0.3cm}
\begin{enumerate} 
	\item $\|\nabla L(x,\lambda)\|_2 = \|\nabla B_{\eta}(x)\|_2;$ 
	\vspace{-0.3cm}
\item $\lambda_t(-f^i(x)) = \frac{\eta}{- f^i(x)}(-f^i(x)) = \eta;$
\vspace{-0.3cm}
\item $\lambda, -f^i(x) \geq 0.$
\end{enumerate}
\vspace{-0.4cm}
Now recall (s-KKT.1)-(s-KKT.3) and suppose that $x$ is such that $\|\nabla B_{\eta}(x)\|_2\leq \eta.$ Then, one can verify that the pair $(x, \lambda)$ 
is an $\eta$-approximate scaled KKT point.

\paragraph{First order log barrier algorithm \citep{hinder2019poly}. }
For structured problems like conic  optimization with known self-concordant barriers and computable Hessians, to solve the log barrier subproblems (\ref{barrier:sub}) barrier methods classically use the Newton algorithm. However, since we assume that the structure is unknown, and  the second order information is inaccessible, we would like to use gradient descent type methods with the step direction $g_t' = -\nabla B_{\eta}(x_t)$  to find a solution of the  barrier subproblem (\ref{barrier:sub}). The main drawback of solving and analysing the log barrier subproblems using gradient methods is that the log barriers themselves are non-Lipschitz continuous and non-smooth functions since their gradients grow to infinity on the boundary. This might lead to unstable behaviour of the gradient based algorithm close to the boundary, and the step sizes have to be chosen exponentially small.  To handle this drawback, 
in the paper \citep{hinder2019poly} the authors proposed to choose an adaptive step size $\gamma_t  =
 \min \left\{ \frac{\min\{-f^i(x_t)\}}{2L\|g_t'\|},\frac{1}{L_2(x_t)}\right\}$, 
where $L_2(x_t)$
represents a local Lipschitz constant of  $\nabla B_{\eta}(x)$ at the  point $x_t$. The convergence rate for finding the solution of the subproblem $B_{\eta}(x)$, i.e., convergence to an $\eta$-approximate KKT point using their algorithm with adaptive step sizes is formulated in Theorem 1.
\begin{theorem}(Claim 2. \citep{hinder2019poly}) Under Assumption \ref{A:1} for any constant $\tau_l>0$, after at most $T$ iterations such that  
	\vspace{-0.3cm}
	$$T\leq  2\frac{B_{\eta}(x_0) - \min_{x\in D}B_{\eta}(x)}{\min\left\{\frac{\tau_l\eta^2}{2L},\frac{\tau_l^2\eta^2}{4(M + \frac{L^2}{\eta})}\right\}}$$
	the procedure finds a point $(x_k,\lambda_k)$ such that 
		\vspace{-0.3cm}
	$$\|\nabla  B_{\eta}(x_k)\|_2\leq 
	 \tau_l\eta\left(1+\|\lambda_k\|_1\right)
	.$$
\end{theorem}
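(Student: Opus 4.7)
The plan is to establish a per-iteration descent of $B_{\eta}$ that is bounded below whenever the approximate KKT criterion fails, and then telescope. The core ingredients are (i) a local Lipschitz bound on $\nabla B_{\eta}$ at the current iterate, (ii) a trust-region argument showing the adaptive step size keeps us in a region where that local bound is valid, and (iii) a two-case lower bound on the decrease.

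First I would differentiate (\ref{log-b}) twice and use Assumption~\ref{A:1} together with the identification $\lambda_t^i=\eta/(-f^i(x_t))$ to show
$\|\nabla^2 B_{\eta}(x_t)\| \leq M + M\|\lambda_t\|_1 + \tfrac{L^2}{\eta}\|\lambda_t\|_2^2 \leq C\bigl(M+\tfrac{L^2}{\eta}\bigr)(1+\|\lambda_t\|_1)^2$
for a numerical constant $C$, so that an admissible local smoothness constant is $L_2(x_t)=C(M+L^2/\eta)(1+\|\lambda_t\|_1)^2$. The trust region on which this bound remains valid is defined by the condition that no $-f^i$ shrinks by more than a factor of two. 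Using the $L$-Lipschitz continuity of $f^i$ together with the first branch of the step size, $\gamma_t\leq \min_j\{-f^j(x_t)\}/(2L\|g_t'\|)$, one gets $|f^i(x_{t+1})-f^i(x_t)|\leq L\gamma_t\|g_t'\|\leq \tfrac12 \min_j\{-f^j(x_t)\}$, so $-f^i(x_{t+1})\geq \tfrac12(-f^i(x_t))>0$, preserving feasibility and keeping the whole segment $[x_t,x_{t+1}]$ inside the trust region.

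On this segment, the descent lemma for an $L_2(x_t)$-smooth function gives $B_{\eta}(x_{t+1})\leq B_{\eta}(x_t)-\gamma_t\|g_t'\|^2+\tfrac{L_2(x_t)\gamma_t^2}{2}\|g_t'\|^2$, and since the second branch of $\gamma_t$ enforces $\gamma_tL_2(x_t)\leq 1$, this simplifies to $B_{\eta}(x_{t+1})\leq B_{\eta}(x_t)-\tfrac12 \gamma_t\|g_t'\|^2$. Next, suppose $x_t$ violates the stopping condition, so $\|g_t'\|_2>\tau_l\eta(1+\|\lambda_t\|_1)$. I would split on which branch is active. In the smoothness branch $\gamma_t=1/L_2(x_t)$, the decrease is at least $\frac{\|g_t'\|^2}{2L_2(x_t)}\geq \frac{\tau_l^2\eta^2(1+\|\lambda_t\|_1)^2}{2C(M+L^2/\eta)(1+\|\lambda_t\|_1)^2}=\Omega\!\bigl(\tfrac{\tau_l^2\eta^2}{M+L^2/\eta}\bigr)$, with the factors of $(1+\|\lambda_t\|_1)^2$ cancelling, which is the whole point of writing the criterion in the $(1+\|\lambda_t\|_1)$-scaled form. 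In the boundary-proximity branch $\gamma_t=\min_i\{-f^i(x_t)\}/(2L\|g_t'\|)$, I use the elementary inequality $\min_i\{-f^i(x_t)\}\geq \eta/\|\lambda_t\|_1$ (from the definition of $\lambda_t^i$) together with the assumed lower bound on $\|g_t'\|$ to get decrease at least $\frac{\min_i\{-f^i(x_t)\}\|g_t'\|}{4L}\geq \frac{\tau_l\eta^2(1+\|\lambda_t\|_1)}{4L\|\lambda_t\|_1}\geq \frac{\tau_l\eta^2}{4L}$.

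Telescoping, as long as the s-KKT condition fails each iteration contributes at least $\tfrac12\min\!\bigl\{\tfrac{\tau_l\eta^2}{2L},\tfrac{\tau_l^2\eta^2}{4(M+L^2/\eta)}\bigr\}$ to $B_{\eta}(x_0)-B_{\eta}(x_T)$, and the latter is bounded by $B_{\eta}(x_0)-\min_{x\in D}B_{\eta}(x)$; rearranging yields the claimed bound on $T$. The main obstacle I anticipate is making the local smoothness argument fully rigorous: one must verify that the Hessian bound $L_2(x_t)$ holds not merely at $x_t$ but uniformly along $[x_t,x_{t+1}]$, which hinges on the trust-region inclusion from step two and on using the $M$-smoothness of each $f^i$ to control how $\nabla f^i$ and $f^i$ vary along the segment. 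The cancellation of $(1+\|\lambda_t\|_1)^2$ in Case~A is what makes the whole scheme work, and it is worth flagging that this is why the algorithm targets the scaled KKT condition rather than the unscaled one.
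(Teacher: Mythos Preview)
Your proposal is correct and follows essentially the same route as the paper. Note that the paper does not itself reprove Theorem~1 (it is quoted from \citet{hinder2019poly}); the closest in-paper analogue is the proof of Lemma~\ref{lemm:rate} in Appendix~A, which carries out exactly your programme with the added perturbation terms $\zeta_t$. The ingredients there are identical to yours: the local Hessian bound $L_2(x_t)=M(1+2\|\lambda_t\|_1)+4L^2\|\lambda_t\|_2^2/\eta$ valid on the trust region $\{x:\|x-x_t\|\le \alpha_t/(2L)\}$ (quoted as Lemma~1 of \citet{hinder2019poly}), the step-size cap $\gamma_t\le \alpha_t/(2L\|g_t'\|)$ guaranteeing the segment stays in that region, the descent inequality $B_\eta(x_t)-B_\eta(x_{t+1})\ge \tfrac12\gamma_t\|g_t'\|^2$ from $\gamma_t L_2(x_t)\le 1$, and the two-branch lower bound on the decrease followed by telescoping. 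Your observation that the $(1+\|\lambda_t\|_1)^2$ factors cancel in the smoothness branch is exactly what drives the paper's bound $L_2(x_t)\le (M+4L^2/\eta)(1+\|\lambda_t\|_1)^2$, and with $C=4$ your constants reproduce those of the theorem.
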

\citet{hinder2019poly} were first  to derive such rates for first order log barrier methods for general $L$-Lipschitz, $M$-smooth functions because of their adaptive step size. This choice 
allowed them to define and use local Lipschitz constant of the barrier gradients for their analysis.

\paragraph{Idea of our algorithm.} We extend the first order algorithm of \citet{hinder2019poly} to the zero-th order oracle case. To this end, we propose to estimate the gradients from zero-th order information using finite differences. Based on these estimates we can estimate the barrier gradients. At the same time, we ensure that the measurements are taken in the safe region despite lack of knowledge of the constraint functions.

\subsection{Zero-th order gradient estimation.} \label{sec:oracle}
We now construct  estimators $G^i(x_t, \nu_t)$ and  $\hat G^i(x_t, \nu_t, \xi_t)$ of the gradient $\nabla f^i(x_t)$ for each $i = 1,\ldots,m$, based on the zero-th order information provided by EZO and SZO, respectively. We denote the differences between the estimators and the function gradient by $\Delta^i_t$ and $\hat \Delta^i_t$:
\vspace{-0.2cm}
\begin{align}\label{delta1}
&\Delta^i_t = G^i(x_t, \nu_t) - \nabla f^i(x_t)\\
\label{delta2}
&\hat \Delta^i_t = \hat G^i(x_t, \nu_t, \xi) -\nabla  f^i(x_t).
\vspace{-0.3cm}
\end{align} 
\paragraph{EZO estimator of the gradient.} For the exact oracle, we take $d$  measurements around the current point to estimate $\nabla f^i(x_t)$. 
Let
$e_j$ be the $j$-th coordinate vector. For $\nu_t > 0$ 
we can use the following 
estimator of $\nabla f^i(x_t)$ \footnote{Another way is to measure at $x_t$ and at $x_t + \nu_t u_t$, where  $u_t$ 
is a random direction and use them for stochastic zero-th order estimator. In that case the dependence on dimensionality will be smaller, however the dependence on the confidence level $\de$ will be much worse:  $O(\frac{d}{\de})$ instead of $O(d^2\sqrt{\ln \frac{1}{\de}})$ for our algorithm, since it then can be obtained only with multi-starts using Markov inequality (c.r. \citet{ghadimi2013stochastic}, \citet{yu2019zeroth}). }:
\vspace{-0.3cm}
\begin{align}\label{est:1}
G^i(x_t, \nu_t) =  \sum_{j = 1}^d\frac{f^i(x_t + \nu_t e_j) - f^i(x_t)}{\nu_t}e_j.
\vspace{-0.3cm}
\end{align}
Using (\ref{M}), we can upper-bound the deviation in (\ref{delta2}) of this estimator from $\nabla f^i(x_t)$ as follows (see Appendix I for the proof):
\vspace{-0.3cm}
\begin{align}\label{bias1}
&\|\Delta_t^i\|_2
\leq  
\frac{\sqrt{d}\nu_t M}{2}.
\end{align}
\paragraph{SZO estimator of the gradient.}
For the stochastic oracle, we take $(d+1)n_t$ measurements around the current point $x_t$. The number of measurements $n_t$ needs to be chosen dependent on $\nu_t$ since the influence of the noise variance increases with the decreasing distance of measurements from each other. 
We define the vector of noises $\xi^i_{l} = \{\xi^i_{0l},\ldots,\xi^i_{dl}\} \in \R^{d+1},$ where $F^i(x_t + \nu_t e_j,\xi^i_{jl}) =  f^i(x_t+ \nu_t e_j) + \xi^i_{jl}, F^i(x_t,\xi^i_{0l}) =  f^i(x_t) + \xi^i_{0l}.$ Then, the estimator $\hat G^i(x_t, \nu_t, \xi^i_{(n_t)})$ with $\xi^i_{(n_t)} = \{\xi^i_l\}_{l = 1,\ldots,n_t}$ is given by
\vspace{-0.3cm}
\begin{align}\label{est:2}
\hspace{-0.3cm}\hat G^i(x_t, \nu_t, \xi^i_{(n_t)}) = \frac{\sum_{l = 1}^{n_t}\hat G^i_0(x_t, \nu_t, \xi^i_l)}{n_t}, 
\end{align} 
\vspace{-1cm}
\begin{align*}
&\hat G^i_0(x_t, \nu_t, \xi^i) = \sum_{j = 1}^d\frac{F^i(x_t + \nu_t e_j, \xi^i_j)- F^i(x_t,\xi^i_0)}{\nu_t}  e_j.
\end{align*}
\begin{lemma}\label{lemma:1}The deviation in (\ref{delta2}) is bounded as:
\vspace{-0.1cm}
	\begin{align*}
	&\Prob \left\{\|\hat \Delta^i_t\|_2   \leq  \sqrt{\frac{d\nu_t^2 M^2}{4}  + 2d\sigma^2 \frac{\ln 1/\delta}{n_t\nu_t^2}}\right\} \geq 1-\de.
	\end{align*}
	To balance the above two terms inside the square root, we can choose $n_t=\frac{8\sigma^2 \ln \frac{1}{\delta}}{3\nu_t^4 M^2}$, then $\Prob \left\{\|\hat \Delta^i_t\|_2  \leq  \frac{\sqrt{d}\nu_t M}{2}\right\} \geq 1-\de.$
\end{lemma}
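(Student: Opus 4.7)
The plan is to split the estimation error into the deterministic finite-difference bias and a zero-mean stochastic average of the oracle noise, control each piece separately, and then combine. Substituting $F^i(x,\xi) = f^i(x) + \xi$ into (\ref{est:2}) gives
\begin{align*}
\hat G^i(x_t, \nu_t, \xi^i_{(n_t)}) = G^i(x_t, \nu_t) + \frac{1}{n_t \nu_t}\sum_{l=1}^{n_t}\sum_{j=1}^{d}(\xi^i_{jl} - \xi^i_{0l})\, e_j,
\end{align*}
so that $\hat\Delta^i_t = \Delta^i_t + \bar N_t$, where $\bar N_t$ denotes the averaged noise vector. By (\ref{bias1}) the bias contributes $\|\Delta^i_t\|_2^2 \leq d\nu_t^2 M^2/4$, which is exactly the first term inside the square root in the statement.

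Next I would control $\bar N_t$. Each coordinate is
$(\bar N_t)_j = (n_t\nu_t)^{-1}\sum_{l=1}^{n_t}(\xi^i_{jl}-\xi^i_{0l})$; since $\xi^i_{jl}-\xi^i_{0l}$ is the difference of two independent $\sigma$-sub-Gaussians and hence $\sqrt{2}\sigma$-sub-Gaussian, averaging $n_t$ independent copies and scaling by $1/\nu_t$ makes $(\bar N_t)_j$ sub-Gaussian with parameter $\tau^2 = 2\sigma^2/(n_t\nu_t^2)$. I would then apply a concentration inequality for the squared Euclidean norm of a sub-Gaussian random vector — either a Hanson--Wright-type bound applied to the underlying $(d+1)n_t$ i.i.d.\ scalar noises, or an MGF/Chernoff argument adapted to the affine structure of $\bar N_t$ — to deduce that with probability at least $1-\delta$,
\begin{align*}
\|\bar N_t\|_2^2 \leq \frac{2 d \sigma^2 \ln(1/\delta)}{n_t \nu_t^2}.
\end{align*}

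Combining the two bounds via the expansion $\|\hat\Delta^i_t\|_2^2 = \|\Delta^i_t\|_2^2 + 2\langle \Delta^i_t,\bar N_t\rangle + \|\bar N_t\|_2^2$ and observing that $\bar N_t$ is mean-zero (so the cross term does not contribute to the leading order) would yield the advertised high-probability bound on $\|\hat\Delta^i_t\|_2$. The second claim then follows by algebraic balancing: equating the noise term $2d\sigma^2\ln(1/\delta)/(n_t\nu_t^2)$ with the bias squared $d\nu_t^2 M^2/4$ pins down $n_t$ (up to the stated constant) and makes the combined bound collapse to $\sqrt{d}\nu_t M/2$.

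The main obstacle is the third step: because all $d$ coordinates $(\bar N_t)_j$ share the same scalar noise $\xi^i_{0l}$, they are not independent, so the MGF does not tensorize across coordinates. One must either invoke a Hanson--Wright-style quadratic-form inequality that operates on the full vector of $(d+1)n_t$ independent scalar sub-Gaussians, or fall back on a coordinate-wise sub-Gaussian tail plus union bound (which introduces an extra $\ln d$ factor that the paper's constants absorb). A secondary subtlety is the cross term $\langle \Delta^i_t, \bar N_t\rangle$: a naive Cauchy--Schwarz gives the looser $(\|\Delta\|_2 + \|\bar N\|_2)^2$ form, and recovering the tighter $\|\Delta\|_2^2 + \|\bar N\|_2^2$ form stated in the lemma requires a more careful argument that exploits the zero-mean structure of $\bar N_t$ under $\xi$.
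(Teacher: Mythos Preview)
Your decomposition is exactly the paper's: Appendix~F writes $\hat\Delta^i_t$ coordinate-wise, bounds each bias coordinate by $M\nu_t/2$ via smoothness, and then asserts
\[
\|\hat\Delta^i_t\|_2^2 \;\leq\; \frac{d\nu_t^2M^2}{4}\;+\;\frac{1}{\nu_t^2}\sum_{j=1}^d(\xi^i_{t,j}-\xi^i_{t,0})^2
\]
with no further comment---i.e., the cross term you flag is simply dropped in the original. So the overall route matches, and you are in fact more scrupulous than the source in naming both obstacles (the shared $\xi^i_{0l}$ across coordinates and the cross term).

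The gap in your write-up is the proposed fix for the cross term: ``$\bar N_t$ is mean-zero, so $2\langle\Delta^i_t,\bar N_t\rangle$ does not contribute to leading order'' is an expectation argument, not a high-probability one, and does not let you discard a random scalar inside a tail bound. The honest repair is not to chase the additive form $\|\Delta\|_2^2+\|\bar N\|_2^2$ at all but to use the triangle inequality $\|\hat\Delta^i_t\|_2\leq\|\Delta^i_t\|_2+\|\bar N_t\|_2$ (or equivalently $(a+b)^2\leq 2a^2+2b^2$), pay the constant, and bound $\|\bar N_t\|_2$ by a union bound over the $d{+}1$ averaged sub-Gaussian scalars; this handles the coordinate dependence and the spare $\ln d$ is absorbed in the constants, as you guessed. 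One more arithmetic point: substituting $n_t=\tfrac{8\sigma^2\ln(1/\delta)}{3\nu_t^4M^2}$ into the first display gives $\sqrt{d}\,\nu_t M$, not $\tfrac{\sqrt d}{2}\nu_t M$; the paper's own appendix records $\sqrt d\,\nu_t M$, so the factor $1/2$ in the lemma statement is a typo you should not try to reproduce.
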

For the proof see Appendix F.
\section{ALGORITHM}
Having computed the gradient estimators for both exact and noisy oracles, we devise our proposed safe oracle-based optimization algorithm. 
\subsection{ Safe zero-th order log barrier algorithm for EZO. } 
Using (\ref{d-log-b}) and (\ref{est:1}), we can estimate $\nabla B_{\eta}(x_t)$ by: 
 \vspace{-0.3cm}
\begin{align}\label{g_t_EZO}
g_t=  G^0(x_t, \nu_t)  + \eta \sum_{i=1}^t\frac{G^i(x_t, \nu_t) }{-f^i(x_t)}.
\end{align}
Motivated by 
\citet{hinder2019poly}, we define the local smoothness constant of $B_{\eta}(x_t)$
  \vspace{-0.2cm}
\begin{align}\label{L_2} 
\hspace{-0.1cm}L_2(x_t) = M+\sum _{i=1}^m\frac{2\eta M}{- f^i(x_t)} + \frac{4 \eta L^2 }{ (- f^i(x_t))^2}.
\end{align}
It is based on bounding the change of $\nabla B_{\eta}(x_t)$ on the region restricted by the step size $\gamma_t$ and is used for obtaining the convergence rate. Then, we propose the following algorithm:
 \vspace{-0.3cm}
\begin{algorithm}[H]
	\caption{Safe Logarithmic Barrier method with EZO \textbf{(0-LBM)}}\label{safe-barrier-0}
	\small
	\begin{algorithmic}[1]
		\BState \emph{Input:} $x_0 \in D$, number of iterations $T$, $\eta > 0$ 
		, $L,M > 0$; 
		\While {$t \leq T$}
		\State Take $\nu_t = \min\left\{\frac{\eta}{\sqrt{d}M}, \frac{\min\{-f^i(x_t)\}}{\max\{L,m\sqrt{d}M\}}\right\};$
		\State Make $d+1$ calls to EZO for $f^i(x), i = 0,\ldots,m$ at the points $x_t, x_t + \nu_t e_j, j = 1,\ldots,d;$
		\State Compute an estimator $g_t$ of  $\nabla B_{\eta}(x_t)$ (\ref{g_t_EZO}), $L_2(x_t)$ (\ref{L_2}) ;
		\State $\gamma_t = \min\left\{
		\frac{\min_i\{- f_i(x_t)\}}{2L\|g_t\| },\frac{1}{L_2(x_t)}\right\}
		$;
		\State $x_{t+1} = x_{t} - \gamma_t g_t$, $\lambda_{t+1} =\left[\frac{\eta}{-f^i(x_{t+1})}\right]$;
		\EndWhile
	\end{algorithmic}
\end{algorithm}

\subsection{ Safe zero-th order log barrier algorithm for SZO. }
In case of a noisy zero-th order oracle, we construct the upper confidence bound $\hat f^i_{\de}(x_t)$  for $f_i(x_t)$ based on $(d+1)n_t$ measurements taken around $x_t$ during the algorithm iterations. 
\begin{lemma}\label{lemma:ucb}
$\Prob\left\{ \hat f^i_{\de}(x_t) - \e_t \leq  f^i(x_t)  \leq  \hat f^i_{\de}(x_t)\right\}\geq 1-\de,$
 	 where
  $\e_t = 2 \frac{\sigma \sqrt{\ln \frac{1}{\de}}}{\sqrt{n_t}}$ and 
 \vspace{-0.3cm}
\begin{align}\label{f_up2_SZO}
\hspace{-0.3cm}
\hat f^i_{\de}(x_t, \xi_{(n_t)}^i)=\frac{\sum_{l = 1}^{n_t} F^i(x_t, \xi^i_{0l})}{n_t}+\frac{\sigma \sqrt{\ln \frac{1}{\de}}}{\sqrt{n_t}}
\end{align}
	\end{lemma}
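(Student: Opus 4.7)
The plan is to reduce the claim to a standard sub-Gaussian concentration bound applied to the sample mean of the noisy measurements at $x_t$. Writing $\bar F := \frac{1}{n_t}\sum_{l=1}^{n_t} F^i(x_t,\xi^i_{0l}) = f^i(x_t) + \bar\xi$ with $\bar\xi := \frac{1}{n_t}\sum_{l=1}^{n_t}\xi^i_{0l}$, the two-sided statement of the lemma is equivalent to
\begin{align*}
-\frac{\sigma\sqrt{\ln 1/\de}}{\sqrt{n_t}} \;\le\; \bar\xi \;\le\; \frac{\sigma\sqrt{\ln 1/\de}}{\sqrt{n_t}}
\end{align*}
holding with probability at least $1-\de$, since the upper bound rearranges to $f^i(x_t)\le \hat f^i_\de(x_t)$ and the lower bound rearranges to $f^i(x_t)\ge \hat f^i_\de(x_t)-\e_t$.

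Next, I would invoke the independence of the noise samples $\{\xi^i_{0l}\}_{l=1}^{n_t}$ stated in the oracle model. Since each $\xi^i_{0l}$ is zero-mean $\sigma$-sub-Gaussian, linearity of the MGF inequality under independent sums gives, for any $\lambda\in\R$,
\begin{align*}
\E\bigl[e^{\lambda \bar\xi}\bigr] \;=\; \prod_{l=1}^{n_t}\E\bigl[e^{(\lambda/n_t)\xi^i_{0l}}\bigr] \;\le\; \exp\!\left(\frac{\lambda^2\sigma^2}{2 n_t}\right),
\end{align*}
so $\bar\xi$ is $\sigma/\sqrt{n_t}$-sub-Gaussian. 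Applying the Chernoff/Hoeffding tail bound for sub-Gaussians to each side then yields $\Prob\{\bar\xi \ge t\}\le \exp(-n_t t^2/(2\sigma^2))$ and likewise for $\Prob\{\bar\xi \le -t\}$.

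Finally, I would set $t = \sigma\sqrt{\ln(1/\de)}/\sqrt{n_t}$ and combine the two one-sided bounds (or use the two-sided form) to obtain $\Prob\{|\bar\xi|\le t\}\ge 1-\de$, then substitute back into the displayed inequality above to conclude the statement. The main subtlety here is tracking the constants: a naive two-sided application gives an extra factor of $2$ from the union bound, and the stated $\sqrt{\ln(1/\de)}$ (rather than $\sqrt{2\ln(2/\de)}$) requires either tightening by a slight change of convention on the sub-Gaussian parameter or folding the factor of $2$ and the factor $1/2$ in the exponent together. Aside from this bookkeeping, no further technical difficulty is expected, and no union bound over $i$ or $t$ is required because the claim is stated pointwise in $i$ and $t$.
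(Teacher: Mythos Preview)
Your proposal is correct and matches the paper's own proof (Appendix D), which likewise writes $f^i(x_t)=\bar F-\bar\xi$, applies the sub-Gaussian/Hoeffding tail bound to the sample mean $\bar\xi$ for each tail separately, and sets the deviation level to $\sigma\sqrt{\ln(1/\de)}/\sqrt{n_t}$. Your caveat about the constants is on point: the paper uses the tail bound $\exp(-n_t b^2/\sigma^2)$ without the factor $1/2$ and does not union-bound the two sides, so the bookkeeping you flag is handled loosely there as well.
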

	For the proof see Appendix D.\\ 
 We construct an estimator of 
$\nabla B_{\eta}(x_t)$ as $\hat g_t$:
\vspace{-0.3cm}
\begin{align}\label{g_t_SZO}
\hspace{-0.4cm}
\hat g_t = \hat G^0(x_t, \nu_t, \xi^0_{(n_t)})  + \eta \sum_{i=1}^t\frac{\hat G^i(x_t, \nu_t, \xi^i_{(n_t)}) }{-\hat f^i_{\de}(x_t,\xi^i_{(n_t)})}.
\end{align}
An upper confidence bound on  
$L_2(x_t)$  is then 
\vspace{-0.2cm}
\begin{align}\label{L_2:ubc}
\hspace{-0.4cm}\hat L_2(x_t) = M+\sum _{i=1}^m\frac{2\eta M}{-\hat f^i_{\de}(x_t)} + \frac{4 \eta L^2}{ (-\hat f^i_{\de}(x_t))^2}.
\end{align}
The algorithm for SZO is as follows:
\vspace{-0.1cm}
\begin{algorithm}[H]
	\caption{Safe Logarithmic Barrier method with SZO \textbf{(s0-LBM)}}\label{safe-barrier-2}
	\small
	\begin{algorithmic}[1]
		\BState \emph{Input:} $x_0 \in D$, $T$, $\eta > 0$, $L,M > 0$, $\sigma > 0$, $\de>0$;
		\While {$t \leq T$}
		\State  $\nu_t = \min\left\{\frac{\eta}{\sqrt{d}M}, \frac{\min\{-f^i(x_t)\}}{\max\{L,m\sqrt{d}M\}}\right\}$, $n_t = \frac{8\sigma^2 \ln \frac{1}{\delta}}{3\nu_t^4 M^2}$;
		\State Make $n_t$ calls to SZO for $f^i(x), i = 0,\ldots,m$ at each point $x_t, x_t + \nu_t e_j, j = 1,\ldots,d;$
		\State Compute the upper confidence bounds $\hat f_i(x_t)$ (\ref{f_up2_SZO}) and the estimator $\hat g_t$ of $\nabla B_{\eta}(x_t)$ (\ref{g_t_SZO}), $L_2(x_t)$ (\ref{L_2});
		\State $\gamma_t = \min \left\{\frac{\min_i\{- \hat f^i_{\de}(x_t)\}}{2L\|\hat g_t\|_2}, \frac{1}{\hat L_2(x_t)}\right\}$; 
		\State $x_{t+1} = x_{t} - \gamma_t \hat g_t $, $\hat \lambda_{t+1} =\left[\frac{\eta}{-\hat f^i_{\de}(x_{t+1})}\right]$;
		\EndWhile
	\end{algorithmic}
\end{algorithm}
\section{SAFETY AND CONVERGENCE}
\subsection{Safety of 0-LBM with EZO}
To guarantee the safety of $x_t$, the 
step size $\gamma_t $ for barrier gradient step $g_t$ is restricted using Lipschitzness of the constraints.

\begin{lemma}\label{lemm:safety1}
For $\nu_{t}\leq \frac{\min_i-f^i(x_{t})}{L}$ the measurements taken around 
$x_t$ at Step 4 of 0-LBM are feasible. Moreover, if the step size $\gamma_t$ for step $g_t$ is such that $\gamma_t \leq \frac{\min\{-f^i(x_t)\}}{2L\|g_t\|}$  then $f^i(x_{t+1}) \leq \frac{1}{2}f^i(x_{t})\leq 0$, i.e., $x_{t+1}$ is feasible.
\end{lemma}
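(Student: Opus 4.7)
The proof rests entirely on the $L$-Lipschitz continuity of the constraint functions $f^i$ (Assumption~\ref{A:1}) applied along the short line segment between $x_t$ and each point where we need feasibility. The key observation is that $x_t$ is strictly feasible, so setting $\alpha_t := \min_i \{-f^i(x_t)\} > 0$ gives the uniform slack $f^i(x_t) \leq -\alpha_t$ for every $i$. Both conclusions follow by showing that the displacement in function value along the relevant segment is at most $\alpha_t$ (respectively, $\alpha_t/2$).

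For the first claim, I would bound each measurement perturbation. For any coordinate direction $e_j$, Lipschitz continuity gives
\begin{equation*}
f^i(x_t + \nu_t e_j) \leq f^i(x_t) + L \|\nu_t e_j\|_2 = f^i(x_t) + L \nu_t.
\end{equation*}
The hypothesis $\nu_t \leq \alpha_t/L$ then yields $f^i(x_t + \nu_t e_j) \leq -\alpha_t + \alpha_t = 0$, so every queried point is feasible for every constraint. Note this condition is automatically satisfied by the choice of $\nu_t$ in Step~3 of 0-LBM.

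For the second claim, I would apply the same Lipschitz argument to the gradient step, using $\|x_{t+1} - x_t\|_2 = \gamma_t \|g_t\|_2$:
\begin{equation*}
f^i(x_{t+1}) \leq f^i(x_t) + L \gamma_t \|g_t\|_2.
\end{equation*}
Substituting the hypothesis $\gamma_t \leq \alpha_t / (2L\|g_t\|_2)$ bounds the second term by $\alpha_t/2$. Since $\alpha_t \leq -f^i(x_t)$, this gives $f^i(x_{t+1}) \leq f^i(x_t) + \tfrac{1}{2}(-f^i(x_t)) = \tfrac{1}{2} f^i(x_t) \leq 0$, which is the desired inequality.

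There is no real obstacle here; the lemma is essentially a sanity check that the step size rule in the algorithm is conservative enough. The only subtlety worth flagging is that the factor $1/2$ in the bound $f^i(x_{t+1}) \leq \tfrac{1}{2} f^i(x_t)$ comes precisely from the factor $2L$ (rather than $L$) in the denominator of the step size cap, and this multiplicative contraction of the slack will later be what prevents the iterates from approaching the boundary too quickly across iterations.
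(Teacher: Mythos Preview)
Your proposal is correct and follows essentially the same route as the paper: bound $f^i$ at the displaced point via the $L$-Lipschitz property of the constraints and the size of the displacement, then compare with the slack $\alpha_t$. If anything, your version is cleaner---the paper's proof inserts an intermediate inequality $f^i(y)\le f^i(x_t)+\langle\nabla f^i(x_t),y-x_t\rangle$ that is not literally valid without concavity (it should be read as a mean-value step), whereas you go directly from Lipschitz continuity to $f^i(y)\le f^i(x_t)+L\|y-x_t\|$.
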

\begin{proof}
For any point $y\in \R^d$ satisfying $\|x_t-y\| \leq \frac{\min_i-f^i(x_t)}{2L}$ we have
$\forall i = 1,\ldots,m,~ f^i(y) \leq $
\begin{align*}
 &\leq   f^i(x_{t}) + \la \nabla f^i(x_{t}) , y - x_t\ra \leq  f^i(x_{t}) + L \|y - x_t\| \\
& \leq  f^i(x_{t}) + L \frac{-f^i(x_t)}{2L}
\leq \frac{1}{2}f^i(x_{t}) \leq 0.
\end{align*}
The statement of the lemma follows directly.
\end{proof}
\subsection{Safety of s0-LBM with SZO}
From Step 5 of the algorithm recall that $\gamma_t \leq \frac{1}{t^{1/3}}\frac{\min_i\{- \hat f^i_{\de}(x_t)\}}{2L \|\hat g_t\|}$ and from \Cref{lemma:ucb} that $\Prob\{\hat f^i_{\de}(x_t) \geq f^i(x_t) \} \geq 1-\de.$ Thus, directly from \Cref{lemm:safety1} 
we obtain the following result.
\begin{lemma}
     For any $x_t$ generated by s0-LBM with probability $\geq 1-\de$ it is true that $f^i(x_{t+1}) \leq 
     \frac{1}{2}f^i(x_{t}) \leq 0$. Moreover, given $\nu_{t+1} \leq \frac{\min_i\{-\hat f^i_{\de}(x_t)\}}{2L}$ the measurements around the next point $x_{t+1}$ 
      at Step 4 of s0-LBM are also feasible with probability $\geq 1-\de$.
\end{lemma}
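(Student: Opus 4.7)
The plan is to obtain both claims by transcribing the proof of \Cref{lemm:safety1} and replacing the true values $f^i(x_t)$ by their upper confidence bounds $\hat f^i_{\de}(x_t)$, using \Cref{lemma:ucb} to control the one-sided error incurred. The key structural observation is that on the high-probability event $\mathcal{E} = \{f^i(x_t) \leq \hat f^i_{\de}(x_t) \text{ for all } i = 1,\ldots,m\}$ one has $-\hat f^i_{\de}(x_t) \leq -f^i(x_t)$, so the step bound from Step 6 of s0-LBM is strictly more conservative than the one that \Cref{lemm:safety1} requires for safety.

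First I would argue the one-step decrease. By \Cref{lemma:ucb} (with a union bound over the $m$ constraints, or equivalently re-parameterizing the per-index confidence parameter), the event $\mathcal{E}$ holds with probability at least $1-\de$. On $\mathcal{E}$ we have $\min_i\{-\hat f^i_{\de}(x_t)\} \leq \min_i\{-f^i(x_t)\}$, hence the step size chosen by the algorithm satisfies
\[
\gamma_t \;\leq\; \frac{\min_i\{-\hat f^i_{\de}(x_t)\}}{2L\|\hat g_t\|_2} \;\leq\; \frac{\min_i\{-f^i(x_t)\}}{2L\|\hat g_t\|_2}.
\]
Plugging this into the Lipschitz estimate of \Cref{lemm:safety1} (with $\hat g_t$ playing the role of $g_t$) gives $f^i(x_{t+1}) \leq f^i(x_t) + L\gamma_t \|\hat g_t\|_2 \leq \tfrac{1}{2} f^i(x_t) \leq 0$ for every $i$, which is the first claim.

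For the second claim, on the same event $\mathcal{E}$ the previous step also shows
\[
-f^i(x_{t+1}) \;\geq\; \tfrac{1}{2}(-f^i(x_t)) \;\geq\; \tfrac{1}{2}(-\hat f^i_{\de}(x_t))
\]
for each $i$, so $\min_i\{-f^i(x_{t+1})\} \geq \tfrac{1}{2}\min_i\{-\hat f^i_{\de}(x_t)\}$. Combining with the hypothesis $\nu_{t+1} \leq \frac{\min_i\{-\hat f^i_{\de}(x_t)\}}{2L}$ yields $\nu_{t+1} \leq \frac{\min_i\{-f^i(x_{t+1})\}}{L}$, and the feasibility of the $d$ finite-difference points $x_{t+1} + \nu_{t+1} e_j$ then follows by applying the first statement of \Cref{lemm:safety1} at the point $x_{t+1}$.

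The only real subtlety is bookkeeping of the confidence level: \Cref{lemma:ucb} is stated per constraint, and the claim as written uses $1-\de$ for the joint event, so either one invokes a union bound (absorbing a factor of $m$ into $\de$) or one interprets $\de$ as the per-index confidence parameter, matching the convention adopted elsewhere in the paper. Apart from this, both assertions reduce to a direct application of \Cref{lemm:safety1} in which the substitution $f^i \mapsto \hat f^i_{\de}$ only ever makes the algorithm more conservative.
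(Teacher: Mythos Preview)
Your proposal is correct and follows essentially the same approach as the paper: the paper's argument is literally just ``from Step 6 recall $\gamma_t \leq \frac{\min_i\{-\hat f^i_{\de}(x_t)\}}{2L\|\hat g_t\|}$, from \Cref{lemma:ucb} $\hat f^i_{\de}(x_t) \geq f^i(x_t)$ with probability $\geq 1-\de$, and then apply \Cref{lemm:safety1} directly,'' which you have unpacked in full detail. Your explicit chain $-f^i(x_{t+1}) \geq \tfrac{1}{2}(-f^i(x_t)) \geq \tfrac{1}{2}(-\hat f^i_{\de}(x_t))$ for the measurement-feasibility part, and your remark about the union bound over the $m$ constraints, are both more careful than anything the paper spells out.
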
 




\subsection{Convergence of 0-LBM and s0-LBM 
}


Let us denote  $\zeta_t  := \nabla  B_{\eta}(x_t)-g_t$ and $\hat \zeta_t := \nabla  B_{\eta}(x_t)-\hat g_t$, as the errors in the estimate of the gradient of the barrier functions for the EZO and SZO, respectively.  Let   
$\alpha_t = \min_{i = 1,\ldots,m}-f^i(x),~ \hat \alpha_t = \min_{i = 1,\ldots,m}-\hat f^i_{\de}(x), .$
Our approach is to first bound $\|\zeta_t\|_2$, and $\|\hat \zeta_t\|_2$ with high probability. Then we can construct a bound on the total number of steps and total number of measurements, required to find an $\eta$-approximate scaled KKT point. 

\textbf{EZO:}
Note that  $\zeta_t  
= \Delta^0_t + \eta\sum_{i=1}^m\frac{\Delta_t}{-f^i(x_t)}.$ As such, it is easy to see that if $\nu_t  = \min\{ \frac{\eta}{\sqrt{d}M},\frac{\alpha_t}{L},\frac{\alpha_t}{m\sqrt{d}M}\}$, then
\vspace{-0.5cm}
\begin{align}\label{b:1}
&\|\zeta_t\|_2 \leq \|\Delta^0_t\|_2 +  \frac{\eta}{\alpha_t}\sum_{i=1}^m\|\Delta^i_t\|_2 
\nonumber\\&\vspace{-0.2cm}
\leq \left(1+\frac{m\eta}{\alpha_t}\right)\frac{\sqrt{d}\nu_t M}{2} \leq \eta. 
\end{align}
\textbf{SZO:}
\begin{lemma}
	If $\nu_t  = \min\{ \frac{\eta}{\sqrt{d}M},\frac{\alpha_t}{L},\frac{\alpha_t}{m\sqrt{d}M}\}$, then
\vspace{-0.4cm}
\begin{align}\label{b:2}
\Prob \bigg\{\|\hat \zeta_t\|_2 \leq \eta\left(4+\frac{\eta}{L\sqrt{d}}\right) \bigg\} \geq 1-\delta.
	\end{align}
\end{lemma}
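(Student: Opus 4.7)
The plan is to separately control the two distinct sources of error inside $\hat \zeta_t = \nabla B_\eta(x_t) - \hat g_t$: the finite-difference gradient-estimation errors $\hat \Delta^i_t$ (bounded by \Cref{lemma:1}) and the UCB-replacement errors $f^i(x_t) - \hat f^i_\delta(x_t) \in [-\e_t, 0]$ (bounded by \Cref{lemma:ucb}). Using the definitions of $\nabla B_\eta$ in~(\ref{d-log-b}) and of $\hat g_t$ in~(\ref{g_t_SZO}), I would first write
\[
\hat \zeta_t = -\hat \Delta^0_t + \eta\sum_{i=1}^m\left[\frac{\nabla f^i(x_t)}{-f^i(x_t)} - \frac{\hat G^i}{-\hat f^i_\de(x_t)}\right],
\]
and for each $i \geq 1$ add and subtract $\hat G^i/(-f^i)$ to split the summand into a pure gradient-estimation piece $-\hat \Delta^i_t/(-f^i)$ and a UCB-replacement piece $\hat G^i \cdot (f^i - \hat f^i_\de)/((-f^i)(-\hat f^i_\de))$. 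A single union bound over $i = 0,\ldots,m$ combining \Cref{lemma:1} and \Cref{lemma:ucb} then gives, with probability at least $1-\de$, simultaneous control $\|\hat \Delta^i_t\|_2 \leq \sqrt{d}\nu_t M/2$ and $|f^i - \hat f^i_\de| \leq \e_t = \sqrt{3/2}\,\nu_t^2 M$, where the size of $\e_t$ is fixed by the choice of $n_t$ in Algorithm~\ref{safe-barrier-2}.

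Next I would apply the step-size rule $\nu_t = \min\{\eta/(\sqrt{d}M),\,\alpha_t/L,\,\alpha_t/(m\sqrt{d}M)\}$, which bundles three useful inequalities: $\sqrt{d}\,\nu_t M \leq \eta$, $m\sqrt{d}\,\nu_t M \leq \alpha_t$, and $L\nu_t \leq \alpha_t$. The first yields $\|\hat \Delta^0_t\| \leq \eta/2$; combined with the second and $-f^i \geq \alpha_t$, the gradient-estimation summands $\eta\sum_i \|\hat \Delta^i\|/(-f^i)$ also contribute at most $\eta/2$. Using $L\nu_t \leq \alpha_t$ one obtains $\e_t \leq \sqrt{3/2}\,\alpha_t\,\eta/(L\sqrt{d})$, whence $\hat \alpha_t \geq \alpha_t - \e_t \geq \alpha_t/2$ in the small-accuracy regime, so $(-f^i)(-\hat f^i_\de) \geq \alpha_t \hat \alpha_t \geq \alpha_t^2/2$. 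Plugging this denominator bound, together with $\|\hat G^i\| \leq L + \sqrt{d}\,\nu_t M/2 \leq L+\eta/2$, into the UCB piece splits it into two: the term proportional to $L$ contributes $O(\eta/\sqrt{d})$ (which combined with $\eta/2+\eta/2$ furnishes the constant~$4$), and the term proportional to $\sqrt{d}\,\nu_t M/2$ contributes $O(\eta^2/(L\sqrt{d}))$ after invoking $\nu_t M \leq \eta/\sqrt{d}$ and $\nu_t/\hat \alpha_t \leq 2/L$.

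The main obstacle is the asymmetry between $\alpha_t$ (used in the definition of $\nu_t$ for the analysis) and $\hat \alpha_t$ (the only quantity accessible at runtime through $\hat f^i_\de$): every denominator bound goes through the estimate $\hat \alpha_t \geq \alpha_t/2$, which in turn requires all three step-size constraints to be invoked in a coordinated way so that $\e_t$ is small relative to $\alpha_t$. Tracking constants carefully enough that the final bound collapses to exactly $\eta(4 + \eta/(L\sqrt{d}))$---rather than a looser numerical expression carrying several $\sqrt{3/2}$ factors---is the other delicate point, resolved either by tight book-keeping or by absorbing mild slack into the constant $4$.
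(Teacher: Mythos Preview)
Your approach is essentially the same as the paper's. The paper introduces the intermediate quantity $g'_t = \hat G^0 + \eta\sum_i \hat G^i/(-f^i(x_t))$ and bounds $\|\hat\zeta_t\| \leq \|\hat g_t - g'_t\| + \|g'_t - \nabla B_\eta(x_t)\|$, which is exactly your add-and-subtract of $\hat G^i/(-f^i)$; it then invokes \Cref{lemma:1} and \Cref{lemma:ucb} together with the three inequalities encoded in $\nu_t$, just as you do. The only cosmetic differences are that the paper writes the gradient-estimation piece as $\sqrt{\|\hat\Delta^0_t\|^2 + \sum_i \eta^2\|\hat\Delta^i_t\|^2/\alpha_t^2}$ rather than using the triangle inequality, and it handles the $\alpha_t$ versus $\hat\alpha_t$ mismatch via $\alpha_t \geq \hat\alpha_t$ (so $1/(\alpha_t\hat\alpha_t) \leq 1/\hat\alpha_t^2$) instead of your $\hat\alpha_t \geq \alpha_t - \e_t \geq \alpha_t/2$; both routes lead to the same final bound.
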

For the proof see Appendix E.\\
Let us next bound the number of iterations of the s0-LBM and 0-LBM.
\begin{lemma}\label{lemm:rate}
	After $T = \frac{C}{\eta^3}$ iterations of the 0-LBM or the s0-LBM algorithm with $C = $
	\vspace{-0.2cm}
	\begin{align*}
	 2(B_{\eta}(x_0) - \min_{x\in D}B_{\eta}(x))
	 \max \left\{mM\eta + 4L^2m, L\eta\right\}
	\end{align*}
 	 we obtain that for some $k \leq T$, $(x_k,\lambda_k)$ satisfy 
 	 \vspace{-0.2cm}
 	 \begin{align*}
 	 \hspace{-0.2cm}
 	 \|\nabla B_{\eta}(x_k)\|_2
 	  \hspace{-0.1cm} \leq \hspace{-0.1cm}
 	 \begin{cases} \hspace{-0.4cm}&\eta(1+\|\lambda_k\|_{\infty})+3\|\zeta_k\|_2\text{, 0-LBM} \\
 	 \hspace{-0.4cm}&\eta(1 + \|\hat \lambda_k\|_{\infty})+3\|\hat \zeta_k\|_2\text{, s0-LBM}
 	 \end{cases}.
 	 \end{align*} 
\end{lemma}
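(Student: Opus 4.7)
The plan is to run a descent-lemma argument on $B_\eta$ and turn the accumulated descent into a bound on $T$ by telescoping, with a case split on which branch of the adaptive step size $\gamma_t$ is active. First I would verify that, by Lemma~\ref{lemm:safety1}, the safety restriction $\gamma_t\|g_t\|_2 \leq \alpha_t/(2L)$ forces $-f^i(y) \geq -f^i(x_t)/2$ for every $y$ on the segment $[x_t,x_{t+1}]$, so differentiating $B_\eta$ twice and using $\|\nabla^2 f^i\|\leq M$, $\|\nabla f^i\|\leq L$ yields $\|\nabla^2 B_\eta(y)\| \leq L_2(x_t)$ with $L_2(x_t)$ as defined in (\ref{L_2}). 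The standard quadratic upper bound then gives
\begin{align*}
B_\eta(x_{t+1}) - B_\eta(x_t) \leq -\gamma_t\langle \nabla B_\eta(x_t), g_t\rangle + \tfrac{L_2(x_t)\gamma_t^2}{2}\|g_t\|_2^2,
\end{align*}
and since $\gamma_t \leq 1/L_2(x_t)$ in either branch of the min, substituting $\nabla B_\eta(x_t) = g_t + \zeta_t$ and applying Cauchy--Schwarz reduces this to
\begin{align*}
B_\eta(x_{t+1}) - B_\eta(x_t) \leq -\tfrac{\gamma_t}{2}\|g_t\|_2^2 + \gamma_t\|\zeta_t\|_2\|g_t\|_2.
\end{align*}

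Next I would argue by contradiction: suppose the conclusion fails at every $t \leq T$, so that $\|\nabla B_\eta(x_t)\|_2 > \eta(1+\|\lambda_t\|_\infty) + 3\|\zeta_t\|_2$. Then $\|g_t\|_2 \geq \|\nabla B_\eta(x_t)\|_2 - \|\zeta_t\|_2 > \eta(1+\|\lambda_t\|_\infty) + 2\|\zeta_t\|_2$, so in particular $\|g_t\|_2 \geq 2\|\zeta_t\|_2$, which collapses the previous display to $B_\eta(x_{t+1}) - B_\eta(x_t) \leq -\gamma_t\|g_t\|_2^2/4$; moreover $\|g_t\|_2 > \eta + \eta^2/\alpha_t$ after using $\|\lambda_t\|_\infty = \eta/\alpha_t$. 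In case A ($\gamma_t = 1/L_2(x_t)$) the per-step descent is $\|g_t\|_2^2/(4L_2(x_t))$; plugging in $L_2(x_t) \leq M(1 + 2m\eta/\alpha_t) + 4m\eta L^2/\alpha_t^2$ and splitting on whether $\eta/\alpha_t \leq 1$ or $>1$, one checks elementarily that $(\eta + \eta^2/\alpha_t)^2/L_2(x_t)$ is bounded below by a constant multiple of $\eta^3/(mM\eta + mL^2)$ uniformly in $\alpha_t$. In case B ($\gamma_t = \alpha_t/(2L\|g_t\|_2)$) the descent is $\alpha_t\|g_t\|_2/(8L) \geq \alpha_t(\eta + \eta^2/\alpha_t)/(8L) \geq \eta^2/(8L)$. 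Telescoping and using $\sum_{t=0}^{T-1}(B_\eta(x_t) - B_\eta(x_{t+1})) \leq B_\eta(x_0) - \min_{x\in D}B_\eta(x)$, the per-step descent times $T$ cannot exceed $B_\eta(x_0) - \min B_\eta$; with $T = C/\eta^3$ and $C$ as in the statement this is violated, a contradiction, so some $k \leq T$ must satisfy the bound.

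For s0-LBM the argument is identical with $g_t,\zeta_t,L_2,\lambda_t,\alpha_t$ replaced by $\hat g_t,\hat\zeta_t,\hat L_2,\hat\lambda_t,\hat\alpha_t$, working on the probability-$(1-\de)$ event of \Cref{lemma:ucb}. On this event $-\hat f^i_\de(x_t) \leq -f^i(x_t)$, hence $\hat L_2(x_t) \geq L_2(x_t)$ and $\hat\alpha_t \leq \alpha_t$, so the actual step $\gamma_t$ is no larger than $1/L_2(x_t)$ and still satisfies $\gamma_t\|\hat g_t\|_2 \leq \alpha_t/(2L)$, preserving the validity of the descent lemma with the true $L_2(x_t)$.

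The main obstacle is the case-A algebra: because both $\|\nabla B_\eta(x_t)\|_2^2$ and $L_2(x_t)$ scale like $1/\alpha_t^2$ as the iterate approaches the boundary, one must check that their ratio is still $\Omega(\eta^3)$ uniformly in $\alpha_t$ and that the implicit constants line up with the $mM\eta + 4L^2m$ in $C$. A minor bookkeeping subtlety is the slack ``$3\|\zeta_k\|_2$'': one copy of $\|\zeta_t\|_2$ is consumed in $\|g_t\|_2 \geq \|\nabla B_\eta(x_t)\|_2 - \|\zeta_t\|_2$, and the remaining $2\|\zeta_t\|_2$ is precisely what is needed to absorb the cross term $\gamma_t\langle \zeta_t,g_t\rangle$ into $-\gamma_t\|g_t\|_2^2/4$.
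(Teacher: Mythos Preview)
Your approach is the paper's: local-smoothness descent lemma via $L_2(x_t)$, contradiction hypothesis, case split on the active branch of $\gamma_t$, then telescope. One arithmetic slip to flag: from $\|g_t\|_2\geq 2\|\zeta_t\|_2$ alone you only get
\[
-\tfrac{\gamma_t}{2}\|g_t\|_2^2+\gamma_t\|\zeta_t\|_2\|g_t\|_2\leq 0,
\]
not $\leq -\tfrac{\gamma_t}{4}\|g_t\|_2^2$; the latter would require $\|g_t\|_2\geq 4\|\zeta_t\|_2$, which the ``$3\|\zeta_k\|_2$'' slack does not give. The paper sidesteps this by not trying to recover a pure $\|g_t\|_2^2$ term: it keeps the per-step descent as
\[
\tfrac{\gamma_t\|g_t\|_2}{2}\bigl(\|g_t\|_2-2\|\zeta_t\|_2\bigr)\;\geq\;\tfrac{\gamma_t\|g_t\|_2}{2}\,\eta\bigl(1+\|\lambda_t\|_\infty\bigr),
\]
using the \emph{full} contradiction hypothesis, and then splits on whether $\gamma_t\|g_t\|_2=\|g_t\|_2/L_2(x_t)$ or $\gamma_t\|g_t\|_2=\alpha_t/(2L)$. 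In the first branch it bounds $(1+\eta/\alpha_t)^2/L_2(x_t)\geq \eta/(mM\eta+4mL^2)$ directly---no sub-case on whether $\eta/\alpha_t$ exceeds $1$ is needed, since both summands in $L_2(x_t)$ are dominated by a constant times $(1+\eta/\alpha_t)^2$. With that small adjustment your Case~A/Case~B bounds, the telescoping, and the s0-LBM reduction all match the paper's proof.
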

For the proof of \Cref{lemm:rate} see Appendix A. To guarantee that the point $(x_k, \lambda_k)$ above is an approximated scaled KKT point, $\|\zeta_k\|_2$ and $\|\hat \zeta_k\|_2$ need to be upper bounded by $O(\eta)$.  Such a bound can be obtained using (\ref{b:1}) and (\ref{b:2}). Consequently, we can also bound the total number of measurements for 0-LBM and s0-LBM for convergence to an approximate scaled KKT point.
\begin{theorem}\label{thm:0-LBM}
	After $T = \frac{C}{\eta^3}$ iterations of 0-LBM, 
 	 there exists an iteration $k  = \arg\min_{k\leq T} \gamma_t\| g_t\|^2_2$ such that $(x_k, \lambda_k) = \left(x_k, [\frac{\eta}{-f^i(x_k)}]\right)$  is an $\eta$-approximate scaled KKT point. The total number of measurements is 
	$N_T \leq T (d+1) =  O\left( \frac{d}{\eta^3} \right).$
\end{theorem}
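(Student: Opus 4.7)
The plan is to combine the iteration-count guarantee from \Cref{lemm:rate} with the gradient-estimation bound~\eqref{b:1}. First, I would invoke \Cref{lemm:rate}: after $T=C/\eta^{3}$ iterations of 0-LBM there exists $k\le T$ (namely $k=\arg\min_{t\le T}\gamma_t\|g_t\|_2^2$) with
\[
\|\nabla B_{\eta}(x_k)\|_{2}\le \eta(1+\|\lambda_k\|_{\infty})+3\|\zeta_k\|_{2}.
\]
Next, I would observe that the choice $\nu_t=\min\{\eta/(\sqrt{d}M),\min_i(-f^{i}(x_t))/\max\{L,m\sqrt{d}M\}\}$ made in Step~3 of Algorithm~1 is exactly the hypothesis needed for~\eqref{b:1}; hence $\|\zeta_t\|_{2}\le\eta$ at every iterate, and in particular at iteration $k$. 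Substituting yields
\[
\|\nabla B_{\eta}(x_k)\|_{2}\le \eta(1+\|\lambda_k\|_{\infty})+3\eta \le 4\eta(1+\|\lambda_k\|_{\infty}),
\]
which, after absorbing the constant factor into $\eta$, is precisely condition (s-KKT.3) once we use the identity $\|\nabla L(x_k,\lambda_k)\|_{2}=\|\nabla B_{\eta}(x_k)\|_{2}$ established in the log-barrier preliminaries.

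The remaining s-KKT conditions follow structurally from the definition $\lambda_k^{i}=\eta/(-f^{i}(x_k))$: by \Cref{lemm:safety1} the iterate $x_k$ is strictly feasible, so $-f^{i}(x_k)>0$ and hence $\lambda_k^{i}\ge 0$, verifying (s-KKT.1); and $\lambda_k^{i}(-f^{i}(x_k))=\eta$ exactly, verifying (s-KKT.2) with equality. Finally, the measurement count is immediate: each of the $T$ iterations of 0-LBM issues $d+1$ EZO queries at Step~4 (one at $x_t$ plus one at each $x_t+\nu_t e_j$), so $N_T\le T(d+1)=(d+1)C/\eta^{3}=O(d/\eta^{3})$.

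The only subtle step I anticipate is the constant absorption: strictly one obtains a $4\eta$-approximate scaled KKT point rather than an $\eta$-approximate one, so to recover the stated accuracy one either runs 0-LBM with input accuracy $\eta/4$ or redefines $C$ accordingly. Either adjustment changes only the numerical constant in $C$ and preserves the $O(d/\eta^{3})$ rate. I do not expect genuine difficulty beyond this bookkeeping, since both the per-iteration gradient-error bound~\eqref{b:1} and the global descent lemma \Cref{lemm:rate} are already in hand, and feasibility along the trajectory is handled separately by \Cref{lemm:safety1}.
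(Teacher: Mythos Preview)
Your proposal is correct and follows essentially the same route as the paper: invoke \Cref{lemm:rate} for the existence of $k$ with $\|\nabla B_\eta(x_k)\|_2\le\eta(1+\|\lambda_k\|_\infty)+3\|\zeta_k\|_2$, plug in the EZO error bound~\eqref{b:1} to control $\|\zeta_k\|_2\le\eta$, verify (s-KKT.1)--(s-KKT.2) directly from $\lambda_k^i=\eta/(-f^i(x_k))$ and feasibility, and read off the $(d+1)T$ measurement count. Your observation about the constant factor (obtaining a $4\eta$- rather than $\eta$-approximate point) is accurate and matches what the paper's own argument in Appendix~G actually yields; the paper handles this the same way, by absorbing the constant.
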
 
\begin{theorem}\label{thm:s0-LBM}
	After $T = \frac{C}{\eta^3}$ iterations of s0-LBM,
there exists an iteration $k  = \arg\min_{k\leq T} \gamma_t\|\hat g_t\|^2$ such that
 with probability greater than 
$1-\de$ 
	$(x_k, \hat \lambda_k)$ is a $4\eta$-approximate scaled KKT point. The total number of measurements is 
	$N_T \leq T (d+1) n_t = 
	O
	\left(\frac{d^3\sigma^2 \ln\frac{1}{\de}}{\eta^3(\min\{\eta,\min_t{\hat \alpha_t}\})^4}
	\right).$
\end{theorem}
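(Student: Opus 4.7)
The plan is to bootstrap from \Cref{lemm:rate}: this lemma already tells us that at the index $k = \arg\min_{t\leq T}\gamma_t\|\hat g_t\|_2^2$ we have
\[
\|\nabla B_\eta(x_k)\|_2 \leq \eta(1+\|\hat\lambda_k\|_\infty) + 3\|\hat\zeta_k\|_2,
\]
so the task reduces to three sub-goals: (i) control $\|\hat\zeta_k\|_2$ uniformly over the run, (ii) translate the resulting bound into the three s-KKT conditions at $(x_k,\hat\lambda_k)$, and (iii) total up the measurement count.

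For (i), I would invoke the per-iteration concentration bound (\ref{b:2}), $\Pr\{\|\hat\zeta_t\|_2 \leq \eta(4+\eta/(L\sqrt{d}))\} \geq 1-\delta$, and apply a union bound over the $T = C/\eta^3$ iterations by replacing the per-step failure probability with $\delta/T$; since the dependence of $n_t$ on $\delta$ is logarithmic, this only inflates the measurement count by a $\log T$ factor absorbed into the stated $\ln(1/\delta)$. Substituting $\|\hat\zeta_k\|_2 \lesssim \eta$ into the display above yields $\|\nabla B_\eta(x_k)\|_2 \leq 4\eta(1+\|\hat\lambda_k\|_\infty)$, which is (s-KKT.3) after using the identity $\|\nabla L(x,\lambda_k)\|_2 = \|\nabla B_\eta(x_k)\|_2$ for the \emph{true} multipliers $\lambda_k = [\eta/(-f^i(x_k))]$; the gap $\lambda_k - \hat\lambda_k$ is $O(\eta\epsilon_t/\hat\alpha_t^2)$ per coordinate by \Cref{lemma:ucb}, so $\|\nabla L(x_k,\hat\lambda_k)\|_2$ differs from $\|\nabla B_\eta(x_k)\|_2$ by an amount proportional to $L\cdot \eta\epsilon_t/\hat\alpha_t^2$, which the algorithmic choice of $n_t$ (controlling $\epsilon_t$) keeps within the $4\eta$ budget.

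For (ii), condition (s-KKT.1) follows from the safety result of Section 5.2 together with $\hat\lambda^i_k = \eta/(-\hat f^i_\delta(x_k))>0$. For (s-KKT.2), we have $\hat\lambda^i_k(-\hat f^i_\delta(x_k)) = \eta$ by construction; transferring to $-f^i(x_k)$ via \Cref{lemma:ucb} replaces $-\hat f^i_\delta(x_k)$ by a quantity at most $\epsilon_t$ larger, so $\hat\lambda^i_k(-f^i(x_k)) \leq \eta(1+\epsilon_t/\hat\alpha_t)$, which the same choice of $\nu_t, n_t$ keeps bounded by $4\eta$.

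For (iii), each iteration costs $(d+1)n_t$ SZO calls with $n_t = 8\sigma^2\ln(1/\delta)/(3\nu_t^4 M^2)$. The prescribed $\nu_t = \Theta(\min\{\eta,\hat\alpha_t\}/\sqrt{d})$ gives $\nu_t^4 = \Theta((\min\{\eta,\hat\alpha_t\})^4/d^2)$; multiplying by $T = C/\eta^3$ then yields
\[
N_T \leq T(d+1)n_t = O\!\left(\frac{d^3\sigma^2\ln(1/\delta)}{\eta^3\,(\min\{\eta,\min_t\hat\alpha_t\})^4}\right).
\]
The main obstacle in this plan is the clean translation between the algorithm's working quantities $\hat f^i_\delta,\hat\lambda$ and the true $f^i,\lambda$ that enter the s-KKT definitions, without blowing up the stated constant $4$; the crucial observation is that every such error term is proportional to $\epsilon_t/\hat\alpha_t$, and the coupled choices of $\nu_t$ and $n_t$ in the algorithm are calibrated precisely so that this ratio (as well as the $\|\hat\zeta_t\|_2$ bound of (\ref{b:2})) stays $O(\eta)$ at every iteration uniformly.
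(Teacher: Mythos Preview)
Your proposal is correct and follows essentially the same route as the paper: invoke \Cref{lemm:rate}, bound $\|\hat\zeta_k\|_2$ via (\ref{b:2}), verify (s-KKT.2) by writing $\hat\lambda^i_k(-f^i(x_k)) = \eta + \eta(\hat f^i_\delta(x_k)-f^i(x_k))/(-\hat f^i_\delta(x_k))$ and controlling the second term through $\e_t$ and the choice of $\nu_t,n_t$, and finally sum the per-iteration oracle cost. If anything, you are slightly more careful than the paper in two respects: you explicitly note the need for a union bound over the $T$ iterations, and you flag the gap between $\nabla B_\eta(x_k)=\nabla L(x_k,\lambda_k)$ and $\nabla L(x_k,\hat\lambda_k)$, which the paper's Appendix~G silently conflates.
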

Note that in s0-LBM, the total number of measurements is dependent on how close the iterations of the algorithm get to the boundary: $\min_t\{\hat \alpha_t\}$. For specific cases, 
we can prove that $\alpha_t$ are bounded from below by $\Omega(\eta)$, because the barrier gradient direction will be pointing out of the boundary. Moreover, for such cases $\|\lambda_t\|_{\infty} = \frac{\eta}{\min_t\{\hat \alpha_t\}}$ are bounded for all $t$, which means that we get an \textbf{unscaled} KKT point.
\begin{corollary}\label{thm:s0-LBM:unscaled}
Assume we have only one smooth constraint $ f^1(x)$, $m = 1$. Also, assume that $\exists F>0:\|\nabla f^1(x)\| \geq F$ for all $x\in \{D| f^1(x)\geq -\eta\}$ close enough  to the boundary.  Then, for $x_t$ generated by the s0-LBM we can guarantee $-f^1(x_t) \geq \frac{F}{L}\eta.$ Hence, after 
	$T = \frac{C}{\eta^3}$ iterations of the s0-LBM for $k  = \arg\min_{k\leq T} \gamma_t\|\hat g_t\|^2$  we find an \textbf{unscaled} $\eta$-approximate KKT point
$\Prob \left\{ \|\nabla B_{\eta}(x_k)\| \leq \eta
	\right\} \geq 1-\de.$
	The total number of measurements is 
	$N_T = O
	\left( \frac{d^{3}}{\eta^7}\right).$
\end{corollary}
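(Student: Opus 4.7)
The plan is to prove the corollary in three stages: establish a uniform lower bound on $-f^1(x_t)$, use this to convert the scaled KKT guarantee into an unscaled one, and plug the bound into the complexity estimate of \Cref{thm:s0-LBM}.

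For the first stage, I would argue by induction that $-f^1(x_t) \geq (F/L)\eta$ throughout the run of s0-LBM, exploiting the fact that the barrier gradient direction points outward whenever the iterate approaches the boundary. In the regime $-f^1(x_t) \geq 2(F/L)\eta$, the safety guarantee preceding \Cref{thm:s0-LBM} yields $-f^1(x_{t+1}) \geq -f^1(x_t)/2 \geq (F/L)\eta$. In the critical ``danger zone'' where $-f^1(x_t) \lesssim (F/L)\eta$, the estimated multiplier $\hat\lambda_t^1 = \eta/(-\hat f^1_\delta(x_t))$ satisfies $\hat\lambda_t^1 \gtrsim L/F$, so that in $\hat g_t = \hat G^0 + \hat\lambda_t^1 \hat G^1$ the barrier term dominates: $\hat\lambda_t^1 \|\hat G^1\|_2 \gtrsim L$, which exceeds $\|\nabla f^0(x_t)\|_2$ (bounded on the compact set $D$ by $M$-smoothness). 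Hence $-\hat g_t$ is nearly aligned with $-\nabla f^1(x_t)$, the direction of steepest descent of $f^1$, and a second-order expansion of $f^1$ using $M$-smoothness combined with the adaptive step size $\gamma_t \leq 1/\hat L_2(x_t)$ yields $f^1(x_{t+1}) < f^1(x_t)$, restoring the invariant.

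Given this lower bound, $\|\hat\lambda_t\|_\infty \leq L/F = O(1)$ for all $t$, so the $4\eta$-approximate scaled KKT conclusion of \Cref{thm:s0-LBM} reads $\|\nabla B_\eta(x_k)\|_2 \leq 4\eta(1 + L/F)$ with probability at least $1-\delta$; absorbing the constant $4(1 + L/F)$ into $\eta$ (equivalently, running the algorithm with a rescaled target accuracy) yields the claimed $\|\nabla B_\eta(x_k)\|_2 \leq \eta$. For the complexity, I substitute $\min_t \hat\alpha_t \geq (F/L)\eta = \Theta(\eta)$ into the bound $N_T = O\bigl(d^3 \sigma^2 \ln(1/\delta)/(\eta^3 (\min\{\eta,\min_t\hat\alpha_t\})^4)\bigr)$ of \Cref{thm:s0-LBM}, which collapses to $N_T = O(d^3/\eta^7)$.

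The main obstacle is making the push-out argument rigorous in the presence of zero-th order noise: one must invoke the high-confidence bound $\|\hat\zeta_t\|_2 \leq \eta(4 + \eta/(L\sqrt{d}))$ from \eqref{b:2} to ensure that the noise cannot flip the sign of $\langle \nabla f^1(x_t), \hat g_t \rangle$ once the barrier term dominates, and to quantify the minimum per-step decrease of $f^1$ so that the iterate provably exits the danger zone in bounded time. A secondary subtlety is the initial condition from \Cref{A:2}: if $-f^1(x_0) < (F/L)\eta$, a short initial phase is required to bring the iterate into the stable regime, but this does not affect the asymptotic complexity.
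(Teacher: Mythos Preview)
Your proposal is correct and follows essentially the same two-regime argument as the paper: in the danger zone the paper directly verifies $\langle \hat g_t,\nabla f^1(x_t)\rangle \geq 0$ by expanding $\langle \nabla f^0 + \frac{\eta}{-f^1}\nabla f^1 + \hat\zeta_t,\nabla f^1\rangle$ and using $\|\nabla f^0\|\leq L$, $\|\nabla f^1\|\in[F,L]$, $\|\hat\zeta_t\|\leq \eta$, then combines this push-out with the halving property outside the zone and plugs the resulting $\alpha_t=\Omega(\eta)$ into \Cref{thm:s0-LBM}. Your worry about quantifying a minimum per-step decrease so the iterate ``exits'' the danger zone is unnecessary: mere monotonicity of $-f^1$ in the zone, together with the halving bound outside, already yields the uniform $\Omega(\eta)$ lower bound by a one-line induction, so no exit-time argument is needed.
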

For the proof see Appendix H. \\
It is also possible to extend this result for the case of multiple constraints, but then more regularity assumptions are needed.
\paragraph{Discussion.}
We use two notions for analysing our algorithms: the number of iterations $T$ and the number of measurements $N_T$. The number of steps of the first order method in \citet{hinder2019poly} was $T = O\left(\frac{1}{\eta^{3}}\right),$ similar to number of steps in our algorithm. In SafeOpt \citep{berkenkamp2016bayesian} the number of measurements 
$N_T$ is such that $\frac{N_T}{\bar\gamma_{N_T}} = O\left(\frac{1}{\eta^2}\right)$ (where $\bar\gamma_{N_T}$ is sub-linear on $N_T$), to achieve the accuracy $\eta$, however the complexity of each iteration is exponential in dimensionality $d$ due to solving a non-convex optimization problem in it. This makes the approach inapplicable for high dimensional problems. 
Safe Frank-Wolfe algorithm from \citet{usmanova2019safe} requires $N_T = O\left(\frac{d^2\ln\frac{1}{\de}}{\eta^2}\right)$  measurements 
for known convex objective and unknown linear constraints. 
 The convexity of the problem simplifies the situation for the number of iterations to $T = O(\frac{1}{\eta}).$ 
The fact that constraints are linear makes 
local information global to estimate the constraint set. In our case safety under non-convexity,  unknown constraints with noisy 1-point feedback, and convergence with high probability come at cost.

\section{EXPERIMENTS}
\begin{figure*}[ht!]
\begin{minipage}[l]{0.29\textwidth}
    \includegraphics[width =\textwidth]{
    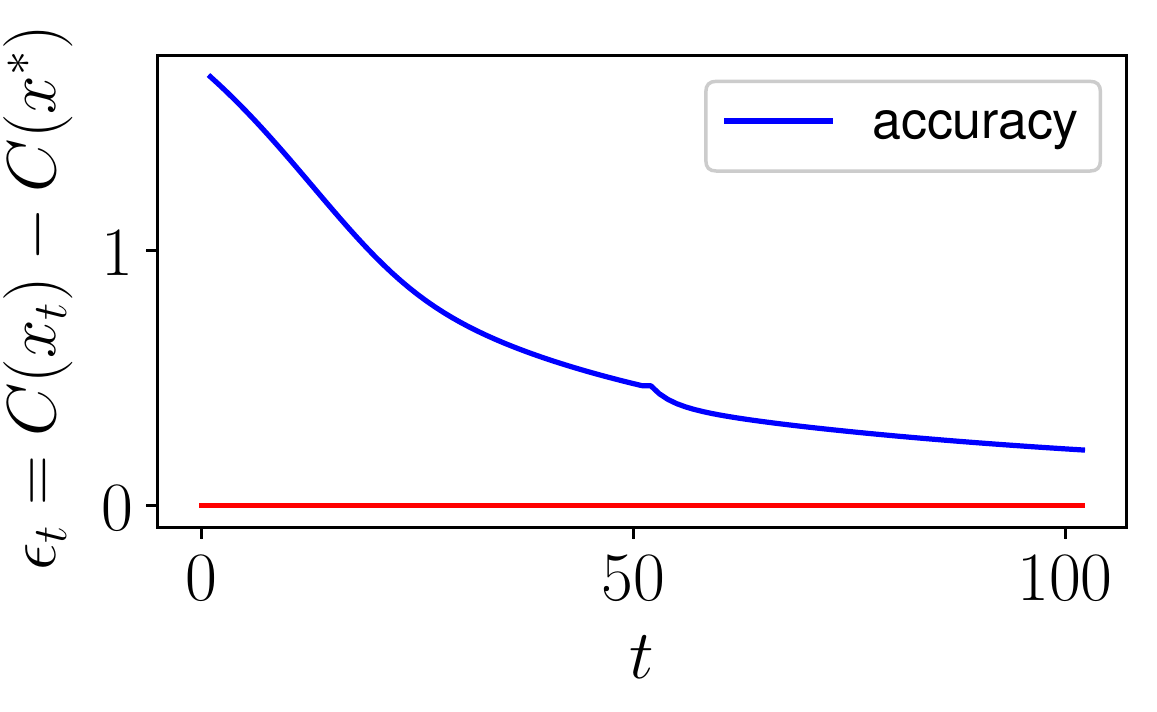}\\
    \includegraphics[width =\textwidth]{
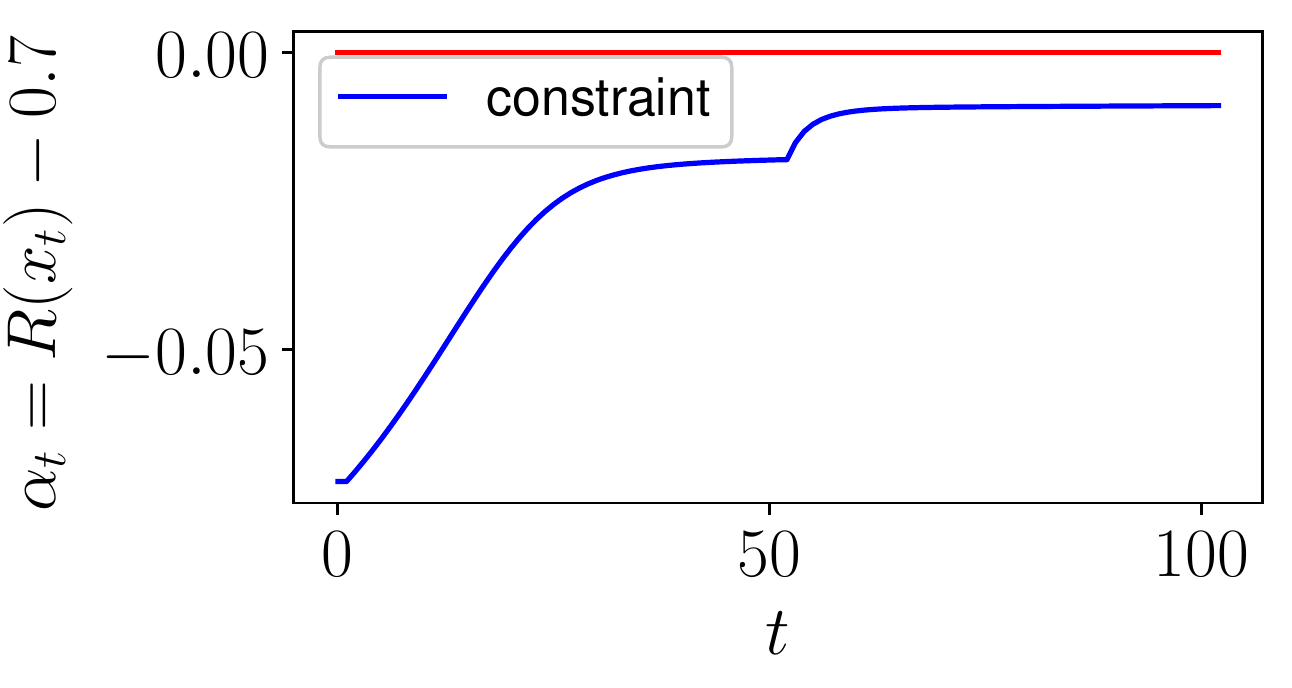} \\
    a)
\end{minipage}
\begin{minipage}[l]{0.19\textwidth}
    \includegraphics[width =\textwidth]{
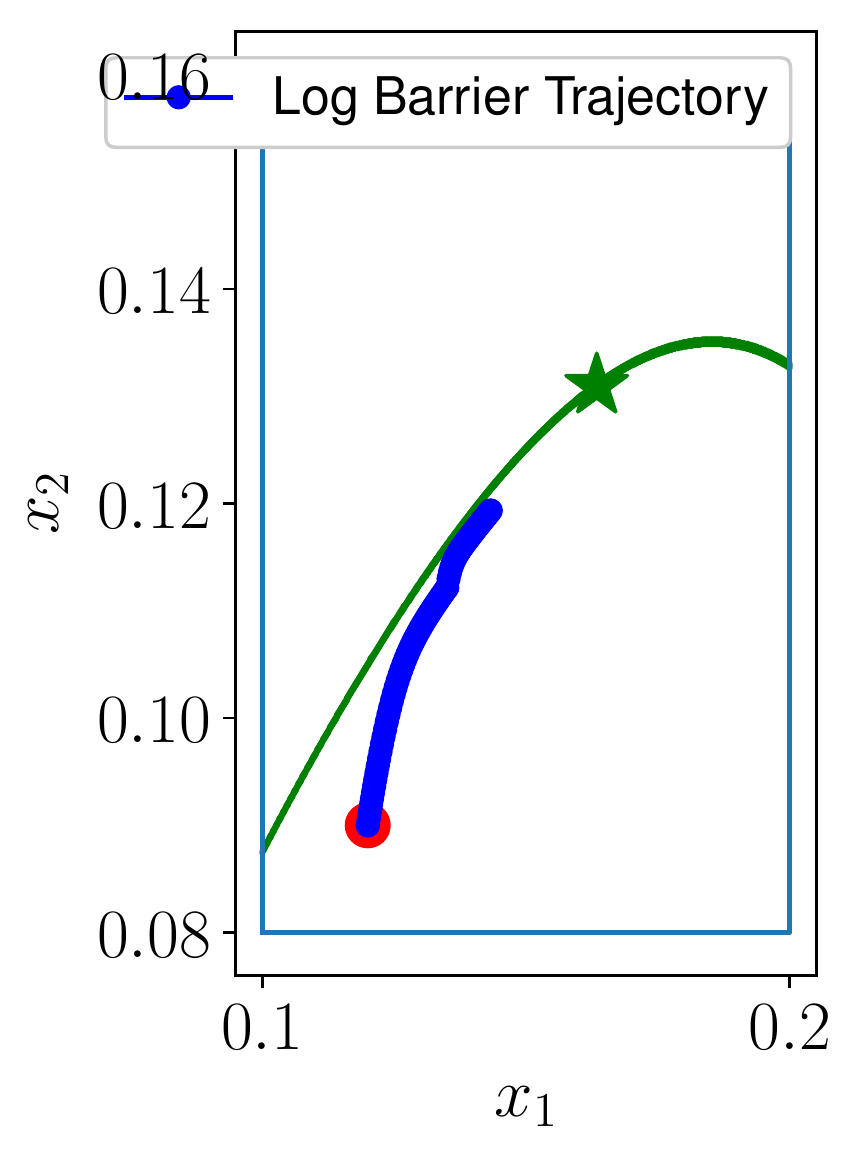}
\end{minipage}
\begin{minipage}[l]{0.29\textwidth}
\includegraphics[width =\textwidth]{
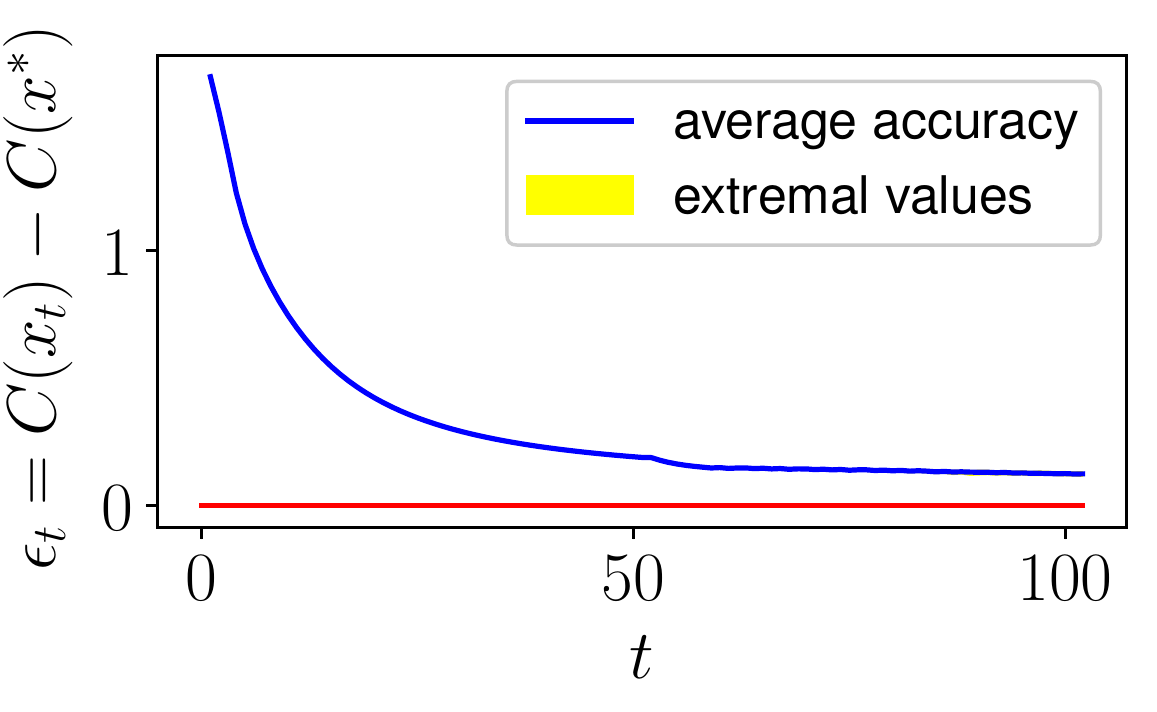}\\
    \includegraphics[width =\textwidth]{
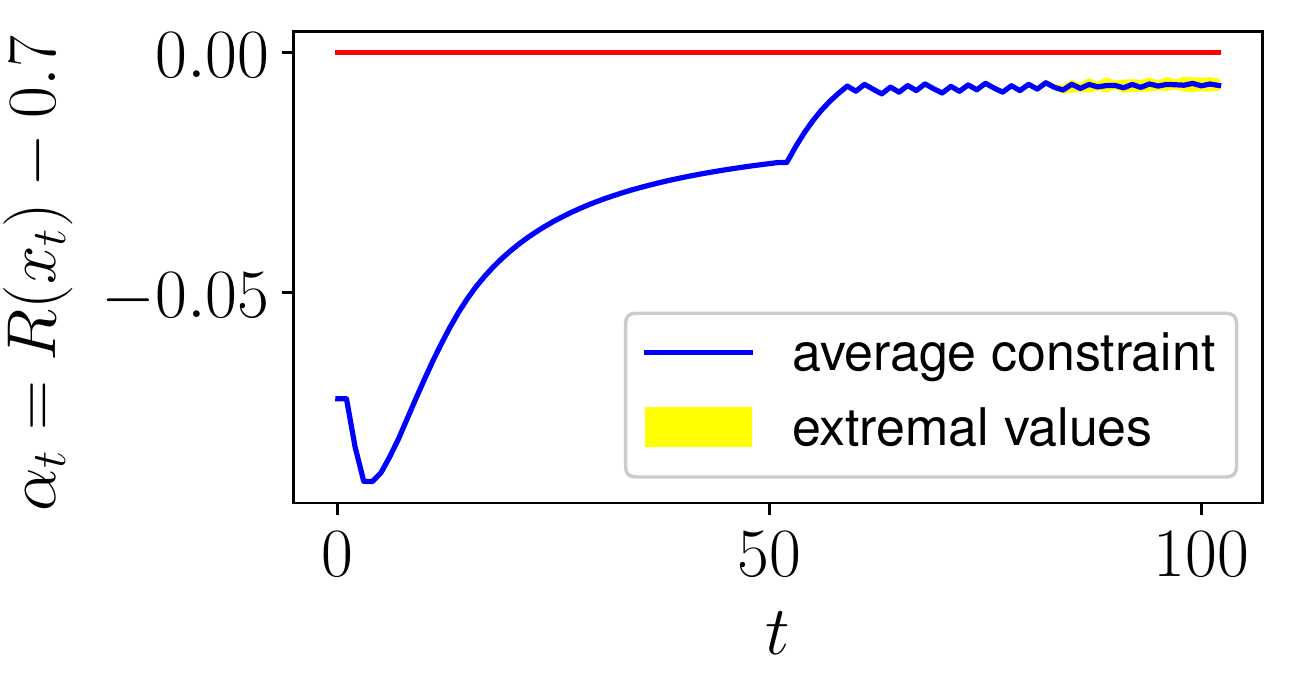} \\
    b)
\end{minipage}
\begin{minipage}[l]{0.19\textwidth}
    \includegraphics[width =\textwidth]{
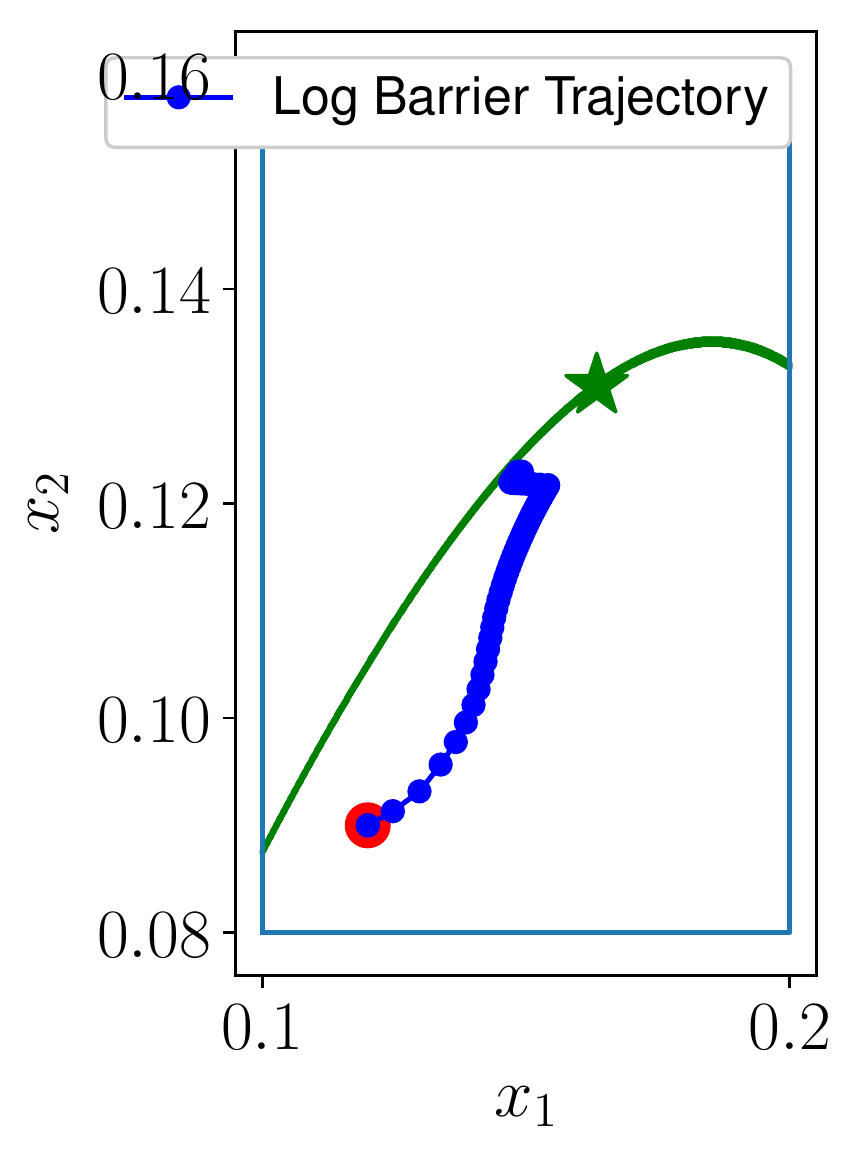} 
\end{minipage}
\caption{ a) Convergence of the objective, distance to the boundary, and optimization trajectories of 0-LBM. By the green star the optimal point is denoted. Blue points denote the optimization trajectory, red point denotes the starting point, green line is the boundary $R(x) = 0.7$.\\
b)  Convergence, distance to the boundary, and optimization trajectories of s0-LBM with SZO $\xi \sim \mathcal N(0,\sigma^2)$ added to the measurements $\hat R(x_t,\xi) =  R(x_t) + \xi$, $\hat C(x_t,\xi) =  C(x_t) + \xi$, 20 samples.
	}\label{F6}
\end{figure*}
We consider the scenario of a cutting machine \citep{maier2018turning} which has to produce certain tools and optimize the cost of production by tuning the turning process parameters such as the feed rate $f$ and the cutting speed $\nu_c$.  For the turning process we need to minimize a non-convex cost function $C(x)$, where the decision variable is $x = (\nu_c, f) \in \R^2$. The constraints include box constraints and a non-convex quality roughness constraint $R(x)$. 
 We perform realistic simulations, by using the  cost function and constraints estimated from hardware experiments 
with artificially added normally distributed noise $\xi \sim N(0,\sigma^2)$. 
   The obtained  non-convex smooth optimization problem with concave objective and convex constraints is:
  \vspace{-0.3cm}
\begin{align*}
&\min_{x\in \R^2} ~~~~~~~~~C(x) = t_c(x)\left(C_M + \frac{C_I}{T(x)} \right)\\
&\text{subject to~~ } 
R(x)\leq 0.7, x_1 = \nu_c \in [100, 200], \\&~~~~~~~~~~~~~~~~~~~~~~~~~~~~~~~~x_2 = f \in [0.08, 0.16].
\end{align*}
Here $t_c(x) = \frac{LD\pi}{\nu_c f},$ $T(x) = 127.5365 - 0.84629\nu_c - 144.21 f  + 0.001703 \nu_c^2 + 0.3656 \nu_c f,$  $R(x) = 0.7844 - 0.010035\nu_c + 7.0877 f + 0.000034 \nu_c^2 - 0.018969 \nu_c f,$ $C_I = 40$, $C_M = 50$. 
Note that we assume the box constraints to be known, i.e., not corrupted with noise. However, the roughness constraint $R(x)$ and the cost $C(x)$ are assumed to be unknown and we only can measure their values. Hence, this problem is an instance of the safe learning problem 
formulated in 
(\ref{problem}). More details are presented by \citet{maier2018turning}, who proposed to use Bayesian optimization to solve the problem. Although the Bayesian optimization used there indeed requires a small number of measurements, it is not safe and hence may require several measurements to be taken in the unsafe region. The roughness constraints are not fulfilled for unsafe measurements, i.e., the tools produced during unsafe experiments could not be realised in the market. That is why safety is necessary for this problem. 
Although there exist safe Bayesian optimization methods, they require strong prior knowledge in terms of suitable kernel function.
We solve barrier sub-problem (\ref{barrier:sub}) iteratively $K = 2$ times using s0-LBM with decreasing $\eta_{k+1} = \frac{\eta_k}{\mu}$, where we fix $\mu = 5$. We set $ \sigma = 0.01, L = 7, M = 5, \de = 0.01$ and re-scaled $\nu'_c = 0.001 \nu_c$ so that $\nu'_c \in [0.1,0.2].$ The starting point is $x_0 = (\nu'_c, f) = (0.15,0.09).$ In \Cref{F6}a) we show the performance of 0-LBM, and in b) we run 20 realizations s0-LBM. In all the realizations the method converges to a local optimum and 
the constraints are not violated.

\section{CONCLUSION}
We developed a zero-th order algorithm guaranteeing safety of the iterates and converging to a local stationary point. We provided its convergence analysis, which is comparable to existing zero-th order methods for non-convex optimization, and demonstrated its performance in a case study. 


\addcontentsline{toc}{section}{Bibliography}
\bibliography{bibliography}{}

\begin{thebibliography}{}

\bibitem[Bach and Perchet, 2016]{bach2016highly}
Bach, F. and Perchet, V. (2016).
\newblock Highly-smooth zero-th order online optimization.
\newblock In {\em Conference on Learning Theory}, pages 257--283.

\bibitem[Balasubramanian and Ghadimi, 2018]{balasubramanian2018zeroth}
Balasubramanian, K. and Ghadimi, S. (2018).
\newblock Zeroth-order (non)-convex stochastic optimization via conditional
  gradient and gradient updates.
\newblock In {\em Advances in Neural Information Processing Systems}, pages
  3455--3464.

\bibitem[Berkenkamp et~al., 2016]{berkenkamp2016bayesian}
Berkenkamp, F., Krause, A., and Schoellig, A.~P. (2016).
\newblock Bayesian optimization with safety constraints: safe and automatic
  parameter tuning in robotics.
\newblock {\em arXiv preprint arXiv:1602.04450}.

\bibitem[Birgin et~al., 2016]{birgin2016evaluation}
Birgin, E.~G., Gardenghi, J., Martinez, J.~M., Santos, S.~A., and Toint, P.~L.
  (2016).
\newblock Evaluation complexity for nonlinear constrained optimization using
  unscaled kkt conditions and high-order models.
\newblock {\em SIAM Journal on Optimization}, 26(2):951--967.

\bibitem[Carmon et~al., 2017]{carmon2017lower}
Carmon, Y., Duchi, J.~C., Hinder, O., and Sidford, A. (2017).
\newblock Lower bounds for finding stationary points i.
\newblock {\em Mathematical Programming}, pages 1--50.

\bibitem[Cartis et~al., 2014]{cartis2014complexity}
Cartis, C., Gould, N.~I., and Toint, P.~L. (2014).
\newblock On the complexity of finding first-order critical points in
  constrained nonlinear optimization.
\newblock {\em Mathematical Programming}, 144(1-2):93--106.

\bibitem[Facchinei et~al., 2017]{facchinei2017feasible}
Facchinei, F., Lampariello, L., and Scutari, G. (2017).
\newblock Feasible methods for nonconvex nonsmooth problems with applications
  in green communications.
\newblock {\em Mathematical Programming}, 164(1-2):55--90.

\bibitem[Ghadimi and Lan, 2013]{ghadimi2013stochastic}
Ghadimi, S. and Lan, G. (2013).
\newblock Stochastic first-and zeroth-order methods for nonconvex stochastic
  programming.
\newblock {\em SIAM Journal on Optimization}, 23(4):2341--2368.

\bibitem[Hinder and Ye, 2018]{hinder2018one}
Hinder, O. and Ye, Y. (2018).
\newblock A one-phase interior point method for nonconvex optimization.
\newblock {\em arXiv preprint arXiv:1801.03072}.

\bibitem[Hinder and Ye, 2019]{hinder2019poly}
Hinder, O. and Ye, Y. (2019).
\newblock A polynomial time log barrier method for problems with nonconvex
  constraints.
\newblock {\em arXiv preprint: https://arxiv.org/pdf/1807.00404.pdf}.

\bibitem[Maier et~al., 2018]{maier2018turning}
Maier, M., Rupenyan, A., Zwicker, R., Akbari, M., and Wegener, K. (2018).
\newblock Turning: Autonomous process set-up through bayesian optimization and
  gaussian process models 
  in turning.
\newblock {\em 12th CIRP Conference on Intelligent Computation in Manufacturing
  Engineering, Gulf of Naples, Italy}.

\bibitem[Schaal and Atkeson, 2010]{schaal2010learning}
Schaal, S. and Atkeson, C.~G. (2010).
\newblock Learning control in robotics.
\newblock {\em IEEE Robotics \& Automation Magazine}, 17(2):20--29.

\bibitem[Sui et~al., 2015]{sui2015safe}
Sui, Y., Gotovos, A., Burdick, J., and Krause, A. (2015).
\newblock Safe exploration for optimization with gaussian processes.
\newblock In {\em International Conference on Machine Learning}, pages
  997--1005.

\bibitem[Tang et~al., 2014]{tang2014feasible}
Tang, C.-m., Liu, S., Jian, J.-b., and Li, J.-l. (2014).
\newblock A feasible sqp-gs algorithm for nonconvex, nonsmooth constrained
  optimization.
\newblock {\em Numerical Algorithms}, 65(1):1--22.

\bibitem[Topkis and Veinott, 1967]{topkis1967convergence}
Topkis, D.~M. and Veinott, Jr, A.~F. (1967).
\newblock On the convergence of some feasible direction algorithms for
  nonlinear programming.
\newblock {\em SIAM Journal on Control}, 5(2):268--279.

\bibitem[Usmanova et~al., 2019]{usmanova2019safe}
Usmanova, I., Krause, A., and Kamgarpour, M. (2019).
\newblock Safe convex learning under uncertain constraints.
\newblock In {\em The 22nd International Conference on Artificial Intelligence
  and Statistics}, pages 2106--2114.

\bibitem[Yu and Ho, 2019]{yu2019zeroth}
Yu, Z. and Ho, D.~W. (2019).
\newblock Zeroth-order stochastic block coordinate type methods for nonconvex
  optimization.
\newblock {\em arXiv preprint arXiv:1906.05527}.

\end{thebibliography}
\bibliographystyle{apalike}
\clearpage 
\newpage
%
\twocolumn[
\section*{Appendix A}
\begin{proof}
First, recall that 
$\alpha_t = \min_{i = 1,\ldots,m}-f^i(x).$
Note that $\lambda_t^i = \frac{\eta}{-f^i(x_t)},$ 
$\|\lambda_t\|_{\infty} =\max_{i=1,\ldots,m}\frac{\eta}{-f^i(x_t)} =\frac{\eta}{\min_{i=1,\ldots,m} -f^i(x_t)}  = \frac{\eta}{\alpha_t}.$
Recall that the steps are given by
\begin{align}
    x_{t+1} =
    & x_t - \gamma_t g_t= x_t - \gamma_t \bigg( G^0(x_t, \nu_t)  + \eta \sum_{i=1}^t\frac{G^i(x_t, \nu_t)}{-f^i(x_t)}\bigg),
\end{align}
where the safe step size $\gamma_t $ is 
$\gamma_t = \min\left\{\frac{\min_i\{- \hat f_i(x_t)\}}{2L \|\hat g_t\|},\frac{1}{L_2(x_t)}\right\}.$ 
 In the paper \citep{hinder2019poly} the authors have shown that
\begin{align*}
L_2(x_t) &= M\left(1+\sum _{i=1}^m\frac{2\eta}{\{-f^i(x_t)\}}\right) + \sum_{i=1}^m\frac{4 \eta L}{ (-f^i(x_t))^2} = M(1+2\|\lambda_t\|_1) + \frac{4L\|\lambda_t\|_2^2}{\eta} 
\end{align*} 
represents a "local" Lipschitz constant of  $\nabla B_{\eta}(x)$ at the  point $x_t$. 
In particular 
in Lemma 1\citep{hinder2019poly} the authors have shown that
$|v^T \nabla^2 B_{\eta}(x_t+ \theta v)v| \leq L_2(x_t) $
for any $\theta \leq \frac{\alpha_t}{2L}$ and $v \in B(0,1)$. 
	Note that 
	\begin{align*}
	L_2(x_t) &= M\left(1+\sum _{i=1}^m\frac{2\eta}{\{-f^i(x_t)\}}\right) + \sum_{i=1}^m\frac{4 \eta L^2}{ (-f^i(x_t))^2} = M(1+2\|\lambda_t\|_1) + \frac{4L^2\|\lambda_t\|_2^2}{\eta} \\
	&\leq M\left(1+2\frac{m\eta}{\alpha_t}\right) + \frac{4L^2\eta m}{\alpha_t^2} = M\left(1+2m\|\lambda_t\|_{\infty}\right) + \frac{4L^2 m\|\lambda_t\|_{\infty}^2}{\eta}. \end{align*} 
	For the next inequalities we also use the fact that $\alpha_t = \frac{\eta}{\|\lambda_t\|_{\infty}}.$ Also we denote by $\hat \gamma_t = \gamma_t\|g_t\|.$
	Then, at each iteration of \Cref{safe-barrier-0}
	we have
	\begin{align*}
&B_{\eta}(x_t) - B_{\eta}(x_{t+1}) \geq - \gamma_t \la \nabla B_{\eta}(x_t),g_t\ra - \frac{1}{2} L_2(x_t) \gamma_t^2 \|g_t\|^2_2=
	\textit{[Taylor's theorem and local smoothness]}\\
&= \gamma_t \la \zeta_t,g_t\ra + \gamma_t\|g_t\|^2_2 - \frac{1}{2} L_2(x_t) \gamma_t^2 \|g_t\|^2_2
	\geq \textit{by definition } \gamma_t = \min\left\{\frac{1}{\hat L_2(x_t)},\frac{\hat\alpha_t}{2L\|g_t\|}\right\}\\
	&\geq - \hat \gamma_t \| \zeta_t\|_2 + \frac{{\gamma_t}}{2}\|g_t\|^2_2  \geq  - \hat \gamma_t \| \zeta_t\|_2 + \frac{\hat \gamma_t }{2}\|g_t\|_2 \geq \frac{\hat \gamma_t }{2}(\|\nabla B_{\eta}(x_t)\|_2 - 3 \| \zeta_t\|_2).
\end{align*}
 Hence, we can derive the following bound for all $t$ for which 
$\|g_t\|_2 \geq \|g_t\|_2 - 2\|\zeta_t\|\geq  \eta(1+\frac{\eta}{\hat \alpha_t})$, 
we have
	\begin{align*}
	& B_{\eta}(x_0) - \min_{x\in D} B_{\eta}(x) \geq B_{\eta}(x_0) - B_{\eta}(x_T) = \sum_{t=0}^{T} \left( B_{\eta}(x_t) - B_{\eta}(x_{t+1})\right) \geq  \sum_{t=1}^{T}\frac{\hat \gamma_t }{2}(\|\nabla B_{\eta}(x_t)\|_2 - 3 \|\zeta_t\|_2)\geq
	\\
	&\geq \frac{1}{2}\sum_{t=1}^T\min\left\{\frac{\|g_t\|_2}{\hat L_2(x_t)},\frac{\hat\alpha_t}{2L}\right\}\eta\left(1+\frac{\eta}{\alpha_t}\right) \geq \frac{1}{2}\sum_{t=1}^T\min\left\{\frac{\eta^2\left(1+\frac{\eta}{\alpha_t}\right)^2}{\hat L_2(x_t)},\frac{\hat\alpha_t \eta\left(1+\frac{\eta}{\alpha_t}\right)}{2L}\right\}\\
	&\geq \frac{1}{2}\sum_{t=1}^T\min\left\{\frac{\eta^2\left(1+\frac{\eta}{\alpha_t}\right)^2}{mM\left(1+\frac{\eta}{\alpha_t}\right) + \frac{4L^2m}{\eta}(\frac{\eta^2}{\alpha_t^2})},\frac{ \eta^2}{L}\right\}\geq \frac{T}{2}\min\left\{\frac{\eta^3}{mM\eta + 4L^2m},\frac{\eta^2}{L}\right\}.
\end{align*}
	\end{proof}
]

	\twocolumn[
	\begin{proof}\textit{(Continuation)}
	Hence after 
\begin{align*}
T \leq 2(B_{\eta}(x_0) - \min_{x\in D} B_{\eta}(x))\max \left\{\frac{mM\eta + 4L^2m}{\eta^3},\frac{L}{\eta^2}\right\}.
\end{align*} iterations we will find $\|g_k\|_2-2\|\zeta_k\|_2\leq \eta\left(1+\frac{\eta}{\hat \alpha_k}\right).$
\end{proof}
\section*{Appendix G}
\begin{proof}
For 0-LBM 
$$\|\nabla B_{\eta}(x_k)\|_2\leq \|g_k\|_2 + \|\zeta_k\|_2 \leq  \eta\left(1+\frac{\eta}{ \alpha_k}\right)+3\|\zeta_k\|_2 .$$
For s0-LBM
$$\|\nabla B_{\eta}(x_k)\|_2\leq \|\hat g_k\|_2 + \|\hat \zeta_k\|_2 \leq  \eta\left(1+\frac{\eta}{ \hat \alpha_k}\right)+3\|\hat \zeta_k\|_2 \leq \eta\left(1+\frac{\eta}{ \alpha_k}\right)+3\|\hat \zeta_k\|_2.$$
If $\|\zeta_k\|_2 \leq \eta,$ then $\|\nabla B_{\eta}(x_k)\|_2\leq \eta(4+\frac{\eta}{\hat\alpha_k}).$

\begin{align*}
&\hat\lambda_i(-f^i(x_k)) = \frac{\eta}{-\hat f^i_{\de}(x_k)} (-f^i(x_k)) = \eta + \frac{\eta}{-\hat f^i_{\de}(x_k)} (\hat f^i(x_k)  -f^i(x_k))\leq \eta + \frac{\eta \e_k}{-\hat f^i_{\de}(x_k)}\\
& \leq \eta + \frac{3\eta\nu_k^2M^2}{-\hat f^i_{\de}(x_k)}\leq \eta + 3\eta\nu_k M \leq 4\eta.
\end{align*}
Then
\begin{align*}
&\|\nabla B_{\eta}(x_k)\| \leq \eta(4+\|\hat\lambda_k\|_{\infty}),\\
& \hat\lambda_k^i (-f^i(x_t)) \leq 4\eta,\\
& -f^i(x_t), \hat\lambda_k^i \geq 0,
	\end{align*} i.e.,  
	$(x_k, \hat \lambda_k)$ is a $4\eta$-approximate scaled KKT point.
	\end{proof}
]

\twocolumn[


\section*{Appendix E}
\begin{proof}
Let us denote by $$g'_t = \hat G^0 (x_t, \nu_t, \xi) + \eta \sum_{i = 1}^m \frac{\hat G^i (x_t, \nu_t, \xi)}{-f^i(x_t)}$$ and $\zeta'_t =  g'_t -  \nabla B_{\eta}(x_t).$ 
Note that by \Cref{f_up2_SZO} we have $0 \leq \hat f^i(x) - f^i(x) \leq \e_t$  with high probability by construction. 
	Then, based on (\ref{g_t_SZO}), with probability $\geq 1-\de$ \begin{align*}
	\|\hat \zeta_t\|_2 &= \|\hat g_t - \nabla B_{\eta}(x_t)\|_2 \leq \|\hat g_t - g'_t\|_2 + \|g'_t - \nabla B_{\eta}(x_t)\|_2  \\
	&= \eta \left\| \sum_{i=1}^m \hat G^i(x_t, \nu_t, \xi)\left(\frac{1}{-\hat f^i(x_t)} - \frac{1}{-f^i(x_t,)} \right)\right\|_2 + \|\zeta'_t\|_2 \\
	&
	\leq \eta \left\| \sum_{i=1}^m\frac{ \hat G^i(x_t, \nu_t, \xi)}{-f^i(x_t)} \frac{ \e_t}{-\hat f^i(x_t)} \right\|_2 + \|\zeta'_t\|_2\leq \frac{\eta}{\alpha_t \hat \alpha_t} \left\| \sum_{i=1}^m ( \nabla f^i(x_t) +\hat \Delta^i_t) \e_t \right\|_2 + \|\zeta'_t\|_2\\
	&\leq
	\frac{\eta}{\hat \alpha_t^2} \left ( mL +\sum_{i = 1}^m \|\hat \Delta^i_t\|_2 \right) \e_t +  \sqrt{\|\hat \Delta^0_t\|^2_2+\sum_{i = 1}^m \frac{\eta^2}{ \alpha_t^2}\|\hat \Delta^i_t\|^2_2} 
	\end{align*}
	
	Recall that by \Cref{lemma:1} with probability $\geq 1-\de$, if $n_t = \frac{8\sigma^2 \ln \frac{1}{\de}}{3M^2 \nu_t^4}.$ 

	$$\|\hat \Delta_t\|^2_2 \leq  \frac{d\nu_t^2 M^2}{4} + 2d\sigma^2 \frac{\ln 1/\delta}{n_t\nu_t^2}
 \leq d\nu_t^2 M^2$$
 $$\e_t \leq 2 \frac{\sigma \sqrt{\ln \frac{1}{\de}}}{\sqrt{n_t}} \leq M\nu_t^2$$
Then 
	\begin{align*}
	\|\hat \zeta_t\|_2 &\leq 
	\frac{\eta}{\hat \alpha_t^2}M\nu_t^2  \left ( mL +m\sqrt{d}\nu_t M\right) +  \sqrt{d\nu_t^2 M^2+m \frac{\eta^2}{ \alpha_t^2}d\nu_t^2 M^2}
	.\end{align*}
		If $\nu_t  = \min\{ \frac{\eta}{\sqrt{d}M},\frac{\alpha_t}{L},\frac{\alpha_t}{m\sqrt{d}M}\}$, then

\begin{align}\label{b:2}
\Prob \bigg\{\|\hat \zeta_t\|_2 \leq \eta\left(4+\frac{\eta}{L\sqrt{d}}\right) \bigg\} \geq 1-\delta.
	\end{align}
\end{proof}
]

\twocolumn[
\section*{Appendix F}
\begin{proof}
    Denote by $\xi^i_{t,j}:= \frac{\sum_{l=1}^{n_t}\xi^i_{jl}}{n_t}$ at iteration $t$.	The deviation is bounded as follows.
	\begin{align*}
	\|\hat \Delta_t^i\|_2  = \|\hat G^i(x_t, \nu_t, \xi^i_{(n_t)}) - \nabla f^i(x_t)\| &= \sqrt{\sum_{j = 1}^d\left[\frac{f^i(x_t + \nu_t e_j)+\xi^i_{t,j} - f^i(x_t)-\xi^i_{t,0}}{\nu_t}   - \la \nabla f^i(x_t), e_j\ra\right]^2\|e_j\|^2} \leq\\
	&
	\leq \sqrt{\frac{d\nu^2 M^2}{4} + \frac{1}{\nu_t^2}\sum_{j=1}^d[ \xi^i_{t,j} - \xi^i_{t,0}]^2} \leq \sqrt{\frac{d\nu_t^2 M^2}{4} + \frac{1}{\nu_t^2}\sum_{j=1}^d[\xi^i_{t,j} - \xi^i_{t,0}]^2} 
	\end{align*}
	Hence, the deviation is upper bounded with high probability  by 
	\begin{align*}
	&\Prob \left\{\|\hat \Delta_t^i\|_2   \leq  \sqrt{\frac{d\nu_t^2 M^2}{4}  + 2d\sigma^2 \frac{\ln 1/\delta}{n_t \nu_t^2}}\right\} \geq 1-\de
	\end{align*}
	
	Hence, $n_t$ has to be chosen by trading off this 2 terms. Setting 
	$n_t \geq \frac{8\sigma^2 \ln \frac{1}{\delta}}{3\nu_t^4 M^2}$, we obtain:
	
	\begin{align}\label{bias2}
	&\Prob \left\{\|\hat \Delta_t^i\|_2  \leq   \sqrt{d}\nu_t M  \right\} \geq 1-\de
	\end{align}
\end{proof}
\section*{Appendix D}
\begin{proof}
Recall $\xi^i_{t,j}:= \frac{\sum_{l=1}^{n_t}\xi^i_{jl}}{n_t}.$ Note that
\begin{align*}
    f^i(x_t) = F^i(x_t, \xi^i_{t,0}) - \xi^i_{t,0} = \frac{1}{n_t}\sum_{l = 1}^{n_t} F^i(x_t, \xi^i_{0l}) - \frac{1}{n_t}\sum_{l = 1}^{n_t}\xi^i_{0l} 
\end{align*}
From the sub-Gaussian property of $\xi^i_0$ and Hoeffding inequality, we have that:
$$\Prob \left\{\sum_{l = 1}^{n_t} \frac{\xi^i_{0l} - \E \xi^i_{0l}}{n_t} \geq b\right\} = 
\Prob \left\{\frac{1}{n_t}\sum_{l = 1}^{n_t} \xi^i_{0l} \geq b\right\}\leq \exp{\frac{-n_t b^2}{\sigma^2}} = \de.$$
Then $b = \frac{\sigma\sqrt{\ln \frac{1}{\de}}}{\sqrt{n_t}},$ $\Prob \left\{\frac{1}{n_t}\sum_{l = 1}^{n_t} \xi^i_{0l} \leq \frac{\sigma\sqrt{\ln \frac{1}{\de}}}{\sqrt{n_t}}\right\}\geq 1-\de.$
Similarly, $$\Prob \left\{\frac{1}{n_t}\sum_{l = 1}^{n_t} -\xi^i_{0l} \leq \frac{\sigma\sqrt{\ln \frac{1}{\de}}}{\sqrt{n_t}}\right\}\geq 1-\de.$$
Hence, if $\hat f^i_{\de} = \frac{1}{n_t}\sum_{l = 1}^{n_t} F^i(x_t, \xi^i_{0l}) + \frac{\sigma\sqrt{\ln \frac{1}{\de}}}{\sqrt{n_t}},$ then $\Prob \left\{\hat f^i_{\de} - 2\frac{\sigma\sqrt{\ln \frac{1}{\de}}}{\sqrt{n_t}} \leq f^i_{\de} \leq \hat f^i_{\de}\right\} \geq 1-\de.$
\end{proof}
 ]
	\twocolumn[
\section*{Appendix H.}
\begin{proof}
Assume we have only one smooth constraint $ f^1(x)$, $m = 1$. Also, assume that close enough  to the boundary  $\forall x\in D: f^1(x)\geq -\eta$ we have $\|\nabla f^1(x)\| \geq F.$ \\
Assume that for some $x_t$ $-f^1(x_t) \leq \frac{F^2}{L^2 + \eta}\eta.$
At that point 
$$F\leq\|\nabla f^1(x_t)\|\leq L$$
and 
$$\|\nabla f^1(x_t)\|\leq L$$
Then the step direction $-\hat g_t = -\nabla B_{\eta} + \hat \zeta_t$ is such that 
\begin{align*}
&\la\hat g_t, \nabla f^1(x_t) \ra = \la\nabla B_{\eta} + \hat \zeta_t, \nabla f^1(x_t) \ra  =\\
&\la\nabla f^0(x_t) +\eta\frac{\nabla f^1(x_t)}{-f^1(x_t)} + \hat \zeta_t, \nabla f^1(x_t) \ra \geq \la\nabla f^0(x_t), \nabla f^1(x_t)\ra + \frac{\|\nabla f^1(x_t)\|^2}{F^2/(L^2+\eta L)} - \eta L \geq L^2 + \eta L - L^2 - \eta L \geq 0.
\end{align*}
Then if $\gamma_t \leq \frac{1}{M}$ and $-f^1(x_t) \leq \frac{F^2}{(L^2 + \eta L)}\eta$, the next step $f^1(x_{t+1}) < f^1(x_{t})$ will decrease. On the other hand, if $-f^1(x_t) \geq \frac{F^2}{(L^2 + \eta L)}\eta$ for one step it cannot decrease more than twice $f^1(x_{t+1}) \geq  \frac{F^2}{2(L^2 + \eta L)}\eta.$ Hence, for $x_t$ generated by s0-LBM we can guarantee $-f^1(x_t) \geq \frac{F^2}{2(L^2 + \eta L)}\eta.$
\end{proof}
\section*{Appendix I.}
\begin{proof}
Using smoothness $$f(x) \geq f(y) + \la \nabla f(y), x-y\ra - \frac{M}{2}\|x-y\|_2^2$$ we can obtain 
\begin{align*}
\|\Delta_t^i\|_2&=
\sqrt{\sum_{j = 1}^d\left[\frac{f^i(x_t + \nu_t e_j) - f^i(x_t)}{\nu_t}  - \la \nabla f^i(x_t), e_j\ra\right]^2\|e_j\|_2^2} \\
&\leq \sqrt{\sum_{j = 1}^d\left[\frac{f^i(x_t + \nu_t e_j) - f^i(x_t) - \la \nabla f^i(x_t), \nu_t e_j\ra}{\nu_t} \right]^2\|e_j\|_2^2}\\
&\leq  \sqrt{\sum_{j = 1}^d\left[\frac{M\nu_t^2}{2\nu_t}\right]^2}
\leq
\frac{\sqrt{d}\nu_t M}{2}.
\end{align*}
\end{proof}
]

\end{document}